\documentclass[10pt,leqno,letterpaper]{article}
\usepackage[utf8]{inputenc}
\usepackage{amsmath}
\usepackage{amsfonts}
\usepackage{amssymb}
\usepackage{graphicx}
\usepackage{mathrsfs}
\usepackage{upref,amsthm,amsxtra,exscale}
\usepackage{cite}
\usepackage[colorlinks=true,urlcolor=blue,
citecolor=red,linkcolor=blue,linktocpage,pdfpagelabels,
bookmarksnumbered,bookmarksopen]{hyperref}
\usepackage{upgreek}

\usepackage{fullpage}

\newtheorem{theorem}{Theorem}[section]

\newtheorem{remark}[theorem]{Remark}
\newtheorem{lemma}[theorem]{Lemma}
\newtheorem{proposition}[theorem]{Proposition}

\numberwithin{equation}{section}

\def\r{\mathbb{R}}
\def\s{\mathbb{S}}
\def\rn{\mathbb{R}^N}
\def\z{\mathbb{Z}}

\def\eps{\varepsilon}

\def\io{\int_{\Omega}}
\def\irn{\int_{\r^N}}

\def\o{\Omega}
\def\t{\Theta}
\def\bf{\mathbf}
\def\tilde{\widetilde}
\def\cA{\mathcal{A}}

\def\cC{\mathcal{C}}

\def\cJ{\mathcal{J}}

\def\cM{\mathcal{M}}
\def\cN{\mathcal{N}}

\def\cT{\mathcal{T}}
\def\cU{\mathcal{U}}

\def\cat{\mathrm{cat}}
\def\dist{\mathrm{dist}}

\def\r{\mathbb{R}}
\def\rn{\mathbb{R}^N}

\def\z{\mathbb{Z}}

\def\eps{\varepsilon}

\def\io{\int_{\Omega}}
\def\irn{\int_{\r^N}}

\def\tilde{\widetilde}

\def\cJ{\mathcal{J}}
\def\cM{\mathcal{M}}
\def\cN{\mathcal{N}}

\def\cU{\mathcal{U}}

\def\cZ{\mathcal{Z}}
\def\bu{\mathbf{u}}


\newcommand{\B}[1]{{\color{blue} #1}}  

\author{Mónica Clapp, \ Alberto Saldaña, \ and \ Andrzej Szulkin}
\title{Configuration spaces and multiple positive solutions to a singularly perturbed elliptic system}
\date{}

\begin{document}
\maketitle
	
\begin{abstract}
We consider a weakly coupled singularly perturbed variational elliptic system in a bounded smooth domain with Dirichlet boundary conditions.  We show that, in the competitive regime, the number of fully nontrivial solutions with nonnegative components increases with the number of equations. Our proofs use a combination of four key elements: a convenient variational approach, the asymptotic behavior of solutions (concentration), the Lusternik-Schnirelman theory, and new estimates on the category of suitable configuration spaces.  

\medskip

\noindent\textsc{Keywords:} Lusternik-Schnirelman theory, barycenter map, configuration spaces, Nehari manifold.\medskip

\noindent\textsc{MSC2010: }
35J50 ·  
35B06 ·  
35B40 ·  
35B09 ·  
35B25 ·  
55M30 ·  
55R80 ·   

\end{abstract}

\section{Introduction}

We consider the following system of singularly perturbed elliptic equations
\begin{equation} \label{eq:system}
\begin{cases}
-\eps^2\Delta u_i+u_i=\sum\limits_{j=1}^\ell\beta_{ij}|u_j|^p|u_i|^{p-2}u_i,\\
u_i \in H^1_0(\o), \quad u_i\neq 0, \qquad i=1,\ldots,\ell,
\end{cases} 
\end{equation}
where $\eps>0$ is a small parameter, $\o$ is a bounded smooth domain in $\rn$, $N\geq 2$, $\ell\geq 2$, $\beta_{ii}>0$, $\beta_{ij}=\beta_{ji}<0$ if $i\neq j$, and $2p\in (2,2^*)$, where $2^*$ is the critical Sobolev exponent, i.e., $2^*:=\infty$ if $N=2$ and $2^*:= \frac{2N}{N-2}$ if $N\geq 3$.

System \eqref{eq:system} is used in physics to model a variety of phenomena; for instance, it is a model for a binary mixture of Bose-Einstein condensates in two different hyperfine states (then $\ell=2$) and in nonlinear optics. It also serves as a model in population dynamics. In this paper we assume $\beta_{ii}>0$ and $\beta_{ij}<0$ for $i\neq j$ which means that the interaction between species (states) of the same type is \emph{attractive} while it is \emph{repulsive} for species of different types. However, our point of view in this article is purely mathematical. 

A solution $\bf u=(u_1,\ldots,u_\ell)$ to \eqref{eq:system} is called \emph{fully nontrivial} if $u_i\neq 0$ for every $i=1,\ldots,\ell$, and we shall call it \emph{nonnegative} if, in addition, $u_i\geq 0$ for all $i=1,\ldots,\ell$.

Most of the research for \eqref{eq:system} is focused on the case $\ell=p=2$ and $N=1,2,3$. In this setting, the asymptotic behavior (as $\eps\to 0$) of nonnegative solutions of \eqref{eq:system} was studied in detail in the seminal paper \cite{lw}. There, it is shown that the least energy solutions exhibit concentration. To be more precise, as $\eps\to 0,$ the $i$-th component is close to a rescaling and translation of the positive radially symmetric ground state solution of
\begin{align*}
-\Delta u+u=\beta_{ii}|u|^{2p-2}u,\qquad
u \in H^1(\rn).
\end{align*}
The concentration points approach a configuration that maximizes the distance between them and to $\partial\o$.  Furthermore, the existence of \emph{two} nonnegative solutions for $\ell=p=2$ and $N=1,2,3$ is shown in \cite{WWL} using the Lusternik-Schnirelman theory.
 
This multiplicity result is interesting when compared to the case of a single equation. Consider, for example,
\begin{align}\label{1eq}
-\Delta u+u=|u|^{2p-2}u,\qquad
u \in H_0^1(\Omega)\smallsetminus\{0\}.
\end{align} 
Uniqueness or multiplicity results for \eqref{1eq} rely on both the geometry and the topology of the domain $\Omega$ (see the introduction of \cite{dis23} for an updated survey in this regard).  In particular, if $\Omega$ is a ball, then \eqref{1eq} has a unique positive solution. 
 
Therefore, a natural question is whether,  for any domain, the number of nonnegative fully nontrivial solutions of \eqref{eq:system} increases as the number of equations $\ell$ becomes larger. We give a positive answer to this question.  Our main result is the following one. 

\begin{theorem}\label{m:thm}
For $\eps$ small enough, the system \eqref{eq:system} has at least $\ell$ nonnegative solutions.
\end{theorem}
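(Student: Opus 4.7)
The plan is to apply Lusternik--Schnirelman theory to a constrained variational formulation, combining the four ingredients announced in the abstract.

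First I would set up the variational framework. Since \eqref{eq:system} is weakly coupled and competitive, the natural object is the component-wise Nehari-type constraint
\begin{equation*}
\cN_\eps := \Big\{\bf u \in H^1_0(\o)^\ell : u_i \neq 0 \text{ and } \eps^2 \io|\nabla u_i|^2 + \io u_i^2 = \sum_{j=1}^\ell \beta_{ij}\io |u_j|^p|u_i|^p \text{ for every } i \Big\}.
\end{equation*}
A fibering argument along the positive scalings $(t_1 u_1,\ldots,t_\ell u_\ell)$ shows that $\cN_\eps$ is a $C^1$ natural constraint for the associated energy functional $\cJ_\eps$, in the sense that its constrained critical points are exactly the fully nontrivial solutions of \eqref{eq:system}; standard work then establishes that $\cJ_\eps|_{\cN_\eps}$ is bounded below by a positive constant and satisfies the Palais--Smale condition.

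Next I would exploit the concentration phenomenon. By an adaptation of the analysis in \cite{lw} to $\ell$ components with repulsive coupling, the minimum $c_\eps := \inf_{\cN_\eps}\cJ_\eps$ satisfies $\eps^{-N}c_\eps\to\sum_{i=1}^\ell c_i^\infty$ as $\eps\to 0$, where $c_i^\infty$ is the ground-state energy of $-\Delta u+u=\beta_{ii}|u|^{2p-2}u$ on $\rn$, and elements of $\cN_\eps$ with energy close to $c_\eps$ decompose, after rescaling, into $\ell$ bumps concentrating at distinct interior points. This would let me define a barycenter map
\begin{equation*}
\beta_\eps : \{\bf u \in \cN_\eps : \cJ_\eps(\bf u) \leq c_\eps+\delta\} \to F_\ell(\o), \quad F_\ell(\o) := \{(x_1,\ldots,x_\ell)\in\o^\ell : x_i\neq x_j \text{ if } i\neq j\},
\end{equation*}
for suitable $\delta>0$ and $\eps$ small. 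Conversely, gluing rescaled positive ground-state profiles produces a continuous map $\iota_\eps:F_\ell^\sigma(\o)\to\cN_\eps$ on the subset $F_\ell^\sigma(\o)\subset F_\ell(\o)$ of configurations with all mutual distances and distances to $\partial\o$ at least $\sigma$, whose image lies in the sublevel set and such that $\beta_\eps\circ\iota_\eps$ is homotopic to the inclusion $F_\ell^\sigma(\o)\hookrightarrow F_\ell(\o)$.

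The Lusternik--Schnirelman principle then produces at least $\cat(F_\ell^\sigma(\o))$ critical points of $\cJ_\eps|_{\cN_\eps}$ below the energy level $c_\eps+\delta$. The crucial remaining step, which I expect to be the main obstacle, is the purely topological estimate
\begin{equation*}
\cat(F_\ell^\sigma(\o)) \geq \ell \qquad \text{for every bounded smooth domain } \o\subset\rn.
\end{equation*}
When $\o$ is contractible (for instance a ball) the category of $\o^\ell$ is $1$, so the bound must come entirely from the removed diagonal strata and thus requires a careful cup-length computation in the cohomology of the ordered configuration space $F_\ell(\o)$, together with an argument (e.g.\ a deformation retraction) identifying the categories of $F_\ell^\sigma(\o)$ and $F_\ell(\o)$ for $\sigma$ small. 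Having obtained $\ell$ fully nontrivial critical points, a standard argument replacing each $u_i$ by $|u_i|$ (which does not increase energy thanks to $\beta_{ij}<0$ for $i\neq j$), combined with the strong maximum principle, finally delivers the $\ell$ nonnegative solutions claimed in Theorem~\ref{m:thm}.
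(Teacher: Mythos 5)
Your proposal captures the overall architecture of the argument — Nehari constraint, concentration, barycenter and gluing maps, Lusternik--Schnirelman on a sublevel set, and a topological lower bound from the configuration space — but the final step contains a genuine gap that the paper resolves in a different way.

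You claim that, having obtained $\ell$ fully nontrivial critical points, one gets $\ell$ nonnegative solutions by replacing each $u_i$ with $|u_i|$. This works for a \emph{minimizer} of $\cJ_\eps$ on $\cN_\eps$, because $\cJ_\eps(|u_1|,\ldots,|u_\ell|)\leq\cJ_\eps(\bf u)$ and the projection onto $\cN_\eps$ does not raise the energy, so $(|u_1|,\ldots,|u_\ell|)$ is again a minimizer and hence a critical point. But the Lusternik--Schnirelman argument produces critical points that need not be minimizers, and for a non-minimizing critical point $\bf u$ whose component $u_i$ changes sign, the tuple $(|u_1|,\ldots,|u_\ell|)$ is in general \emph{not} a critical point of $\cJ_\eps$ (the absolute value of a sign-changing solution of $-\eps^2\Delta u + u = f(u)$ is not a solution). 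There is a second, related problem: $\cJ_\eps$ and $\cN_\eps$ are invariant under sign changes of the individual components, i.e. under the action of $\cZ := (\z_2)^\ell$, so the $\cat(\cN_\eps^{\leq d})$ critical points you obtain could lie on fewer than $\ell$ distinct $\cZ$-orbits; distinct critical points that differ only by component-wise signs would yield the same nonnegative tuple after taking absolute values.

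The paper closes this gap with two ingredients you do not have. First, it works throughout with the quotient $\cN_\eps(\o)^{\leq d}/\cZ$ and applies equivariant Lusternik--Schnirelman theory (Theorem~\ref{thm:cat}), which directly produces at least $\cat(\cN_\eps(\o)^{\leq d}/\cZ)$ critical \emph{$\cZ$-orbits}, not merely critical points; the barycenter map passes to the quotient because $b(u)=b(|u|)$. Second, and crucially, it proves separately (Proposition~\ref{prop:sign}) that every low-energy critical point of $\cJ_\eps$ has components of constant sign, via a profile-decomposition argument: if some $u_i$ changed sign, both $u_i^+$ and $u_i^-$ would carry essentially a full ground-state energy $\mathfrak{c}_{\infty,i}$, contradicting that the total energy converges to $\sum_i\mathfrak{c}_{\infty,i}$. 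Only with this sign result does each critical $\cZ$-orbit contain a unique nonnegative representative. Without something like Proposition~\ref{prop:sign}, your plan does not yield nonnegative solutions.

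Two smaller points. The barycenter of a function concentrating near $\partial\o$ need not lie in $\o$; this is why the paper targets $F_\ell(\o_r^+)$ rather than $F_\ell(\o)$ and works with the relative category $\cat(F_{\ell,r}(\o)\hookrightarrow F_\ell(\o_r^+))$ rather than $\cat(F_\ell^\sigma(\o))$. Also, for the bound $\geq\ell$ the paper does not use a cup-length computation in $F_\ell(\o)$ (which would be delicate for general $\o$); instead it reduces to a ball inside $\o$, uses the Fadell--Neuwirth fibration to show $E_{\ell,r}\hookrightarrow F_\ell$ is a homotopy equivalence, and invokes Roth's theorem $\cat(F_\ell(\rn))=\ell$.
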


Note that this result concerns any dimension $N\geq 2$. If $p<2$, which is necessarily the case for $N\ge 4$, then neither the functional corresponding to \eqref{eq:system} nor the Nehari-type manifold (defined later) is of class $\cC^2$. It will therefore be convenient to employ the method of \cite{csz}. Then, we follow the ideas introduced in \cite{bc} which allow to estimate the number of critical points in the presence of concentration using the Lusternik-Schnirelman theory. For a multiplicity result in dimensions $N=2$ and $N=3$ with $p=2$, under symmetry assumptions on the domain and the coupling coefficients, we refer to \cite{Huang}. 
	
Theorem~\ref{m:thm} is a direct consequence of Theorems~\ref{thm:main} and~\ref{thm:configuration} below. To state these results we introduce some notation.  

Let $X$ be a topological space. Recall that \emph{the Lusternik-Schnirelman category of a subset $A$ of $X$ in $X$}, denoted $\cat(A\hookrightarrow X)$ (or $\cat_X(A)$), is the the smallest number of subsets of $A$ that cover $A$ and each of them is open in $A$ and contractible in $X$. If $A=X$ we write $\cat(X)$ instead of $\cat(X\hookrightarrow X)$.

Let $\t$ be a subset of $\rn$. For $r\geq 0$ and $\ell\geq 2$, we consider the space
$$F_{\ell,r}(\t):=\{(\xi_1,\ldots,\xi_\ell)\in\t^\ell:\dist(\xi_i,\rn\smallsetminus\t)>r\text{ \ and \ }|\xi_i-\xi_j|> 2r\text{ if }i\neq j\},$$
endowed with the subspace topology of $\t^\ell$. If $r=0$ we write $F_\ell(\t):=F_{\ell,0}(\t)$. This last space is called the \emph{ordered configuration space of $\ell$ points in $\t$}. Finally, we set
$$\t^+_r:=\{x\in\rn:\dist(x,\t)<r\}.$$

The following theorem gives a lower bound on the number of nonnegative solutions of \eqref{eq:system}.

\begin{theorem} \label{thm:main}
Given $r>0$, there exists $\eps_r>0$ such that the system \eqref{eq:system} has at least 
\begin{align}\label{cat}
\cat(F_{\ell,r}(\o)\hookrightarrow F_\ell(\o_r^+)) 
\end{align}
nonnegative solutions for any $\eps\in(0,\eps_r)$.
\end{theorem}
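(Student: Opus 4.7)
The plan is to adapt the Benci--Cerami strategy from \cite{bc}, extended to systems as in \cite{WWL}, to any dimension $N\geq 2$ and any number $\ell\geq 2$ of equations. First I would set up the variational framework: let $J_\eps$ be the energy functional associated with \eqref{eq:system} on $H:=H^1_0(\o)^\ell$, and let $\cN_\eps\subset H$ be the Nehari-type set of fully nontrivial tuples $\bf u=(u_1,\ldots,u_\ell)$ satisfying the $\ell$ scalar Nehari identities $\partial_{u_i}J_\eps(\bf u)u_i=0$. Since $2p$ can be less than $4$ (necessarily when $N\geq 4$), neither $J_\eps$ nor $\cN_\eps$ is of class $\cC^2$, so I would employ the abstract Lusternik--Schnirelman framework of \cite{csz} on $\cN_\eps$. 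Restricting to the positive cone and using $\beta_{ij}<0$ for $i\neq j$, critical points there produce nonnegative solutions of \eqref{eq:system}.

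The heart of the proof is the construction, for any $r>0$ and all sufficiently small $\eps$, of two continuous maps
\begin{equation*}
F_{\ell,r}(\o)\;\xrightarrow{\;\Phi_\eps\;}\;\cN_\eps^{\,c_\infty+\delta}\;\xrightarrow{\;\beta_\eps\;}\;F_\ell(\o_r^+),
\end{equation*}
where $\cN_\eps^{\,c_\infty+\delta}:=\{\bf u\in\cN_\eps:J_\eps(\bf u)\leq c_\infty+\delta\}$ and $c_\infty:=\sum_{i=1}^\ell c_i$ is the sum of the ground state energies $c_i$ of the limit equations $-\Delta w+w=\beta_{ii}|w|^{2p-2}w$ in $\rn$. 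I would define $\Phi_\eps(\xi_1,\ldots,\xi_\ell)$ by placing at each $\xi_i$ a cut-off of the positive radial ground state of the $i$-th limit problem, rescaled by $\eps$. The separation $|\xi_i-\xi_j|>2r$ and the condition $\dist(\xi_i,\rn\smallsetminus\o)>r$ ensure disjoint supports inside $\o$, so the coupling terms vanish and, after Nehari projection, the energy is $c_\infty+o(1)$ as $\eps\to 0$. The map $\beta_\eps$ is a component-wise barycenter.

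Next I would establish, in the spirit of \cite{lw}, that every $\bf u\in\cN_\eps^{\,c_\infty+\delta}$ concentrates: each $u_i$ is close (after the $\eps$-rescaling) to a translate of the $i$-th ground state, with $\ell$ well-separated centers that stay away from $\partial\o$. This guarantees that $\beta_\eps(\bf u)\in F_\ell(\o_r^+)$ once $\eps$ is small, and a direct estimate shows that the barycenter of the cut-off ground state placed at $\xi_i$ is close to $\xi_i$, so that $\beta_\eps\circ\Phi_\eps$ is homotopic to the inclusion $F_{\ell,r}(\o)\hookrightarrow F_\ell(\o_r^+)$. A standard category inequality (cf.\ \cite{bc}) then gives
\begin{equation*}
\cat_{\cN_\eps^{\,c_\infty+\delta}}\!\bigl(\cN_\eps^{\,c_\infty+\delta}\bigr)\;\geq\;\cat\bigl(F_{\ell,r}(\o)\hookrightarrow F_\ell(\o_r^+)\bigr),
\end{equation*}
and together with the Palais--Smale condition for $J_\eps|_{\cN_\eps}$ below $c_\infty+\delta$ (which holds in the subcritical regime), the Lusternik--Schnirelman theorem in the version of \cite{csz} produces the required number of nonnegative critical points.

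The main obstacle is the concentration analysis for the system: extending the single-equation profile decomposition of \cite{lw} to $\ell$ components in all dimensions $N\geq 2$, including the non-smooth range $p<2$, and doing so uniformly in $\eps$ and uniformly over $\bf u\in\cN_\eps^{\,c_\infty+\delta}$. The repulsive coupling $\beta_{ij}<0$ is the driving mechanism that forces distinct components to separate, since any asymptotic overlap would strictly increase the energy above $c_\infty$. Dovetailing this quantitative concentration statement with the non-$\cC^2$ Nehari framework of \cite{csz} is the delicate step on which the whole argument depends.
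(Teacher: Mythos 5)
Your outline is largely faithful to the paper's strategy, but there is one genuine gap and one technical shortcut the paper takes that you should note.

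The gap is in the passage from the category bound to a count of \emph{nonnegative} solutions. You estimate $\cat\bigl(\cN_\eps^{\,c_\infty+\delta}\bigr)\ge\cat(F_{\ell,r}(\o)\hookrightarrow F_\ell(\o_r^+))$ and invoke Lusternik--Schnirelman theory to produce that many critical points of $\cJ_\eps$ on $\cN_\eps^{\,c_\infty+\delta}$. However, the functional and the Nehari set are invariant under the group $\cZ=(\z_2)^\ell$ acting by $(s_1,\ldots,s_\ell)\cdot(u_1,\ldots,u_\ell)=(s_1u_1,\ldots,s_\ell u_\ell)$, and the barycenter map satisfies $b(u)=b(|u|)$, so it cannot distinguish a critical point $\bf u$ from any of its $2^\ell$ sign-flips $\bf s\bf u$. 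Counting critical points in $\cN_\eps^{\,c_\infty+\delta}$ therefore drastically overcounts nonnegative solutions: all $2^\ell$ sign-flips of one solution would be tallied as distinct. ``Restricting to the positive cone'' does not repair this, since constrained critical points on a cone need not be free critical points of $\cJ_\eps$. The paper resolves this by (a) proving that \emph{all} low-energy critical points have sign-definite components (so each critical $\cZ$-orbit contains exactly one nonnegative solution), and (b) working $\cZ$-equivariantly: the barycenter map factors as $\widetilde{\bf b}\circ q$ through the orbit space $\cN_\eps^{\,\leq\overline d}/\cZ$, the inclusion $F_{\ell,r}(\o)\hookrightarrow F_\ell(\o_r^+)$ factors through $q$ as well, and an equivariant LS theorem (Theorem~\ref{thm:cat}, using that $\cZ$ acts freely) yields $\cat\bigl(\cN_\eps^{\,\leq\overline d}/\cZ\bigr)\geq\cat(F_{\ell,r}(\o)\hookrightarrow F_\ell(\o_r^+))$ critical $\cZ$-orbits. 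Without this equivariant bookkeeping your argument does not prove the stated lower bound on nonnegative solutions.

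The technical shortcut concerns the ansatz map. You propose cut-offs of the $\rn$ ground states followed by a Nehari projection and a homotopy to the inclusion. The paper instead places, at each $\xi_i$, the positive least-energy solution $\omega_{\eps,r,i}$ of the Dirichlet problem on $B_r(0)$ extended by zero. Because the supports are disjoint by construction and each $\omega_{\eps,r,i}$ already sits on its own scalar Nehari manifold, the resulting tuple lies in $\cN_\eps(\o)$ with no projection needed; moreover, by radial symmetry $b(\omega_{\eps,r,i}(\cdot-\xi_i))=\xi_i$ exactly (property $(B_3)$), so $\bf b\circ\bf i_\eps$ \emph{equals} the inclusion rather than merely being homotopic to it. Your approach would also work after some extra estimates, but the paper's choice eliminates the projection step and the homotopy argument entirely.
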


The next result gives an estimate for \eqref{cat}.

\begin{theorem} \label{thm:configuration}
For $r>0$ sufficiently small, we have that
$$\cat(F_{\ell,r}(\o)\hookrightarrow F_\ell(\o_r^+))\geq\ell.$$
Furthermore, if $\o$ is convex and $r>0$ is sufficiently small, then \ $\cat(F_{\ell,r}(\o)\hookrightarrow F_\ell(\o))=\ell$.
\end{theorem}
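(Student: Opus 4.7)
The plan is to establish the two inequalities separately: the lower bound by a cohomological cup-length argument, and the matching upper bound in the convex case via the homotopy equivalence $F_\ell(\Omega) \simeq F_\ell(\rn)$ together with a Fadell--Neuwirth style induction. The lower bound in the convex case will come for free from the monotonicity $\cat_Y(A) \geq \cat_X(A)$ whenever $A \subset Y \subset X$, applied with $Y = F_\ell(\Omega)$ inside $X = F_\ell(\Omega_r^+)$.

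For the lower bound $\cat(F_{\ell,r}(\Omega) \hookrightarrow F_\ell(\Omega_r^+)) \geq \ell$, I would use the standard estimate that if $\alpha_1, \ldots, \alpha_{\ell-1} \in H^{>0}(X)$ are cohomology classes whose cup product restricts non-trivially to $H^*(A)$ under $i \colon A \hookrightarrow X$, then $\cat_X(A) \geq \ell$. Setting $X := F_\ell(\Omega_r^+)$ and $A := F_{\ell,r}(\Omega)$, define Arnold-type classes $\omega_{1j} \in H^{N-1}(X)$ for $j = 2, \ldots, \ell$ as pullbacks of a generator of $H^{N-1}(S^{N-1})$ under the Gauss maps $\pi_{1j}(\xi_1, \ldots, \xi_\ell) := (\xi_j - \xi_1)/|\xi_j - \xi_1|$. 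To see that $i^*(\omega_{12} \cup \cdots \cup \omega_{1\ell}) \neq 0$, I construct an explicit embedding $\Phi \colon (S^{N-1})^{\ell-1} \hookrightarrow F_{\ell,r}(\Omega)$: fix $x_0 \in \Omega$ with $d_0 := \dist(x_0, \partial\Omega)$ and set
\[
\Phi(\theta_2, \ldots, \theta_\ell) := \bigl(x_0,\; x_0 + R_2\theta_2,\; \ldots,\; x_0 + R_\ell\theta_\ell\bigr), \qquad R_j := \tfrac{jd_0}{2\ell}.
\]
For $r < d_0/(4\ell)$, the estimates $|\Phi(\theta)_i - \Phi(\theta)_j| \geq |R_i - R_j| \geq d_0/(2\ell) > 2r$ and $\dist(\Phi(\theta)_j, \partial\Omega) \geq d_0/2 > r$ place $\Phi$ inside $F_{\ell,r}(\Omega)$. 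Since $\pi_{1j} \circ \Phi$ is the projection onto the $j$-th sphere factor, naturality forces $\Phi^*(\omega_{12} \cup \cdots \cup \omega_{1\ell})$ to equal the top generator of $H^{(\ell-1)(N-1)}((S^{N-1})^{\ell-1})$, hence non-zero, and therefore $i^*(\omega_{12} \cup \cdots \cup \omega_{1\ell}) \neq 0$ in $H^*(A)$.

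For the upper bound in the convex case, a bounded convex open $\Omega \subset \rn$ is diffeomorphic to $\rn$, so $F_\ell(\Omega) \cong F_\ell(\rn)$, and it suffices to prove $\cat(F_\ell(\rn)) \leq \ell$. I would proceed by induction on $\ell$ using the Fadell--Neuwirth fibration $F_\ell(\rn) \to F_{\ell-1}(\rn)$: its fiber $\rn \setminus \{\ell - 1 \text{ points}\}$ is homotopy equivalent to a wedge of $\ell - 1$ copies of $S^{N-1}$ (category $2$), and it admits a global section (send $(\xi_1, \ldots, \xi_{\ell-1})$ to a point lying far away in a fixed direction). The inductive step $\cat(F_\ell(\rn)) \leq \cat(F_{\ell-1}(\rn)) + 1$ closes the induction, and since $\cat_X(A) \leq \cat(X)$ this completes the proof.

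The hard part is the non-vanishing of the cup product of Arnold classes after restricting to the much smaller subspace $F_{\ell,r}(\Omega)$; the explicit sphere-tower embedding $\Phi$ resolves this cleanly by reducing the question to a trivial top-class computation on a product of spheres. The remaining ingredients---monotonicity of relative category and the Fadell--Neuwirth induction---are classical.
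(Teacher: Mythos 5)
Your lower-bound argument is correct, and it is a genuinely different route from the paper's. The paper establishes $\cat(F_{\ell,r}(\o)\hookrightarrow F_\ell(\o_r^+))\geq\ell$ by sandwiching the inclusion $E_{\ell,r}(B_s(0))\hookrightarrow F_\ell(\rn)$ inside it, showing via Fadell--Neuwirth fibrations, the long exact homotopy sequence, and the five lemma (Proposition~\ref{prop:configuration}) that $E_{\ell,r}(B_s(0))\hookrightarrow F_\ell(B_s(0))$ is a homotopy equivalence, and then invoking Roth's $\cat(F_\ell(\rn))=\ell$. Your cohomological argument bypasses all of that: the relative cup-length estimate together with the explicit sphere-tower embedding $\Phi$ reduces everything to a Künneth computation on $(S^{N-1})^{\ell-1}$. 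This is cleaner and more self-contained for the lower bound; the details of $\Phi$ (the choices $R_j=jd_0/(2\ell)$ and $r<d_0/(4\ell)$, the pairwise separation bound $|R_i-R_j|\geq d_0/(2\ell)$, and the factorization $\pi_{1j}\circ\Phi=p_j$) check out, and the relative cup-length inequality $\cat(A\hookrightarrow X)\geq k+1$ when $i^*(\alpha_1\cup\cdots\cup\alpha_k)\neq 0$ is standard. Your reduction of the convex-case lower bound to the general one via the monotonicity $\cat_Y(A)\geq\cat_X(A)$ for $A\subset Y\subset X$ is also fine.

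The gap is in the upper bound for the convex case. You assert the inductive step $\cat(F_\ell(\rn))\leq\cat(F_{\ell-1}(\rn))+1$ on the grounds that the Fadell--Neuwirth fiber $\rn\smallsetminus\{\ell-1\ \text{pts}\}$ has category $2$ and the fibration admits a section. That is not enough: the standard James/Varadarajan bound for a fibration $F\to E\to B$ is the \emph{multiplicative} one, $\cat(E)\leq\cat(F)\cdot\cat(B)$, which here only gives $\cat(F_\ell(\rn))\leq 2\,\cat(F_{\ell-1}(\rn))$ and does not close the induction. The additive bound $\cat(E)\leq\cat(B)+\cat(F)-1$ is known for products, but it is not an automatic consequence of the existence of a section; in fact the fiber inclusion here is not null-homotopic (the Arnold classes restrict nontrivially to the fiber), so the usual hypotheses for sharper additive estimates fail. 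The fix is simply to cite \cite[Theorem 1.2]{r}, which gives $\cat(F_\ell(\rn))=\ell$ directly and is exactly what the paper does; combined with the homeomorphism $F_\ell(\o)\cong F_\ell(\rn)$ for convex $\o$ and property $(C_1)$, that yields $\cat(F_{\ell,r}(\o)\hookrightarrow F_\ell(\o))\leq\cat(F_\ell(\o))=\ell$. With that replacement your proof is complete.
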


Let us briefly discuss the strategy of the proofs. Existence of a nonnegative solution $\bf{u}=(u_1,\ldots,u_\ell)$ was established in \cite{cs} by minimization on a Nehari-type manifold following the approach in \cite{csz}. Denote such manifold corresponding to our system \eqref{eq:system} by $\cN_{\eps}(\o)$. We show that all low-energy solutions, i.e. solutions $\bu\in \cN_{\eps}(\o)^{\leq \overline d}$ for a suitable $\overline d$, do not change sign and exhibit concentration as $\eps\to 0$ (see  Section \ref{sec:variational} for the definitions, and Theorem~\ref{thm:profiles} and Proposition~\ref{prop:sign} for more rigorous statements).  To count only the \emph{nonnegative} solutions, we use a suitable quotient space. Let $\z_2:=\{1,-1\}$ and consider the group $\cZ:=(\z_2)^\ell$ acting on $H_0^1(\o)^\ell$ by $\bf s\bf u:=(s_1u_1,\ldots,s_\ell u_\ell)$ for $\bf s=(s_1,\ldots,s_\ell)\in\cZ$ and $\bf u=(u_1,\ldots,u_\ell)\in H_0^1(\o)^\ell.$  Then, applying the method of \cite{csz} and some standard results in critical point theory with symmetries we show that \eqref{eq:system} has at least $\cat(\cN_{\eps}(\o)^{\leq\overline d}/\cZ)$ nonnegative solutions (see Theorem~\ref{thm:cat}), where $\cN_{\eps}(\o)^{\leq\overline d}/\cZ$ is the space of $\cZ$-orbits in $\cN_{\eps}(\o)^{\leq\overline d}$. Now the task is to explicitly estimate this number. Using the concentration behavior, we construct maps
\begin{align}\label{i:intro}
F_{\ell,r}(\o)\xrightarrow{\bf i_\eps}\cN_{\eps}(\o)^{\leq \overline{d}}\xrightarrow{q}\cN_{\eps}(\o)^{\leq \overline{d}}/\cZ\xrightarrow{\widetilde{\bf b}}F_\ell(\o_r^+),
\end{align}
where $\bf i_\eps:F_{\ell,r}(\o)\to \cN_{\eps}(\o)^{\leq \overline d}$ (defined in \eqref{i:def}) places a copy of a suitable concentrated profile around each point of an $\ell$-tuple $(\xi_1,\ldots,\xi_\ell)\in F_{\ell,r}(\o)$, $q:\cN_{\eps}(\o)^{\leq \overline{d}}\to\cN_{\eps}(\o)^{\leq \overline{d}}/\cZ$ is the quotient map, and $\widetilde{\bf b}$ is defined by $\bf b=\widetilde{\bf b}\circ q$, where $\bf b:\cN_{\eps}(\o)^{\leq d}\to F_\ell(\o_r^+)$ is a generalized barycenter map (see Proposition~\ref{prop:b}). The composition of these maps is the inclusion, whose category is a lower bound for $\cat\left(\cN_{\eps}(\o)^{\leq d}/\cZ\right)$ (see Section~\ref{sec:cs}).  

Theorem~\ref{thm:configuration} is proved by induction on $\ell$ using suitably constructed fibrations which allow to apply some basic tools from algebraic topology in order to obtain the estimate $\cat(F_{\ell,r}(\o)\hookrightarrow F_\ell(\o_r^+))\geq \cat (F_\ell(\rn)) = \ell$ (see Section~\ref{sec:configuration}). The equality $\cat (F_\ell(\rn)) = \ell$ is shown in \cite[Theorem 1.2]{r}.

Therefore, methodologically, our main contribution is to show how the category of $F_{\ell,r}(\o)\hookrightarrow F_\ell(\o_r^+)$ can be estimated and used to prove multiplicity of nonnegative solutions of variational systems that exhibit concentration. We believe that this approach can be useful in other problems as well, for instance, to study systems of equations with suitable potentials in unbounded domains, and to obtain multiplicity results for low-energy nodal solutions of \eqref{eq:system}.

Our results hold true for systems with more general power nonlinearities, like those considered in \cite{cs,csz,css}. We have considered the particular system \eqref{eq:system} for the sake of simplicity.

System \eqref{eq:system} has been extensively studied in recent years from several perspectives. Here we just mention some of these results to provide an overview on different aspects of the problem.  For a study of dependence of the concentration points on suitable potentials, see \cite{p06}. For results on sign-changing solutions we refer to \cite{cs} and the references therein. For problems in $\rn$, see \cite{si,BjLyMsh,WjWy}, and for a recent study on the decay at infinity of solutions we refer to \cite{acs23}. For a relationship between nodal solutions and fully nontrivial solutions of \eqref{eq:system} in the critical case, see \cite{css}.

We also mention that the use of configuration spaces, barycenter maps, and the Lusternik-Schnirelman theory is not completely new.  It has been used, for instance, in \cite{bcw} to study lower bounds for the number of nodal solutions to a singularly perturbed nonlinear Schrödinger equation. 

The paper is organized as follows. In Section~\ref{sec:variational} we present the variational framework and give a lower bound for the number of solutions to \eqref{eq:system} in terms of $\cat\left(\cN_{\eps}(\o)^{\leq d}/\cZ\right)$. Section~\ref{sec:sign} is devoted to showing that low-energy solutions do not change sign. The maps \eqref{i:intro} are constructed in Section~\ref{sec:cs}, where we also give a proof of Theorem~\ref{thm:main}. Finally, in Section~\ref{sec:configuration}, we show the estimate in Theorem~\ref{thm:configuration}.

\section{The variational problem}
\label{sec:variational}

For $\eps>0$ fixed, let 
\[
\|v\|_\eps := \left(\frac1{\eps^N}\io(\eps^2|\nabla v|^2+v^2)\right)^{1/2}
\] 
be the norm of $v$ in $H^1_0(\o)$ (equivalent to the usual one).
We write the elements in $H_0^1(\o)^\ell$ as $\bf u=(u_1,\ldots,u_\ell)$ and we introduce the norm $\|\bf u\|_\eps := (\|u_1\|_\eps^2 + \cdots +\|u_\ell\|_\eps^2)^{1/2}$. The solutions to the system \eqref{eq:system} are the critical points of the functional $\cJ_\eps:H_0^1(\o)^\ell\to\r$ given by
\begin{align*}
\cJ_\eps(\bf u):= \frac{1}{2\eps^N}\sum_{i=1}^\ell\io\left(\eps^2|\nabla u_i|^2 + u_i^2\right) - \frac{1}{2p\eps^N}\sum_{i,j=1}^\ell\io\beta_{ij}|u_j|^p|u_i|^p,
\end{align*}
which is of class $\mathcal{C}^1$. Its partial derivatives at $\bf u$ are given by
\begin{align*}
\partial_i\cJ_\eps(\bf u)v =\frac{1}{\eps^N}\Big[\io(\eps^2\nabla u_i \cdot \nabla v + u_iv) -\sum_{j=1}^\ell\io\beta_{ij}|u_j|^p|u_i|^{p-2}u_iv\Big],
\end{align*}
for $v \in H^1_0(\o)$ and $i=1,\ldots,\ell$. The fully nontrivial solutions of \eqref{eq:system} belong to the Nehari-type set
\begin{equation}\label{N}
\cN_{\eps}(\o):=\Big\{\bf u\in H_0^1(\o)^\ell:u_i\neq 0, \ \partial_i\cJ_\eps(\bf u)u_i=0, \ \forall \ i = 1,\ldots,\ell\Big\}.
\end{equation}
Define 
\begin{equation*}
c_\eps(\o):= \inf_{\bf u\in\cN_{\eps}(\o)}\cJ_\eps(\bf u).
\end{equation*}
If $\bf u\in\cN_\eps(\o)$ satisfies $\cJ_\eps(\bf u)=c_\eps(\o)$, then $\bf u=(u_1,\ldots,u_\ell)$ is a solution to the system \eqref{eq:system}, see \cite[Theorem 3.4(a)]{csz}. It is called a least energy solution. Since $\o$ is bounded and $2p$ is subcritical, a standard argument yields a least energy solution for any $\eps>0$, see \cite[Theorem 2.6]{cs}. 

Let $\z_2:=\{1,-1\}$. Recall that the group $\cZ:=(\z_2)^\ell$ acts on $H_0^1(\o)^\ell$ by
\begin{equation}\label{eq:action}
\bf s\bf u:=(s_1u_1,\ldots,s_\ell u_\ell)\qquad\text{where \ }\bf s=(s_1,\ldots,s_\ell)\in\cZ\text{ \ and \ }\bf u=(u_1,\ldots,u_\ell)\in H_0^1(\o)^\ell.
\end{equation}
The \emph{$\cZ$-orbit of $\bf u$} is the set $\cZ\mathbf{u}:= \{\mathbf{s}\mathbf{u}: \mathbf{s}\in\cZ\}$. A subset $\cA$ of $H_0^1(\o)^\ell$ is called \emph{$\cZ$-invariant} if $\cZ\bf u\subset\cA$ whenever $\bf u\in\cA$, and a function $\Phi:\cA\to\r$ is called \emph{$\cZ$-invariant} if $\Phi$ is constant on each $\cZ$-orbit of $\cA$. The \emph{$\cZ$-orbit space of $\cA$} is the set $\cA/\cZ:=\{\cZ\bf u:\bf u\in\cA\}$ endowed with the quotient space topology. Note that $\cN_\eps(\o)$ and $\cJ_\eps$ are $\cZ$-invariant, and so is the sublevel set
$$\cN_{\eps}(\o)^{\leq d}:=\{\bf u\in\cN_{\eps}(\o):\cJ_\eps(\bf u)\leq d\},\qquad d\in\r.$$
If $\bf u$ is a critical point of $\cJ_\eps$, then so is $\bf s\bf u$ for every $\bf s\in\cZ$, and $\cZ\bf u$ is called a \emph{critical $\cZ$-orbit of $\cJ_\eps$}. 

\begin{theorem} \label{thm:cat}
Let $\eps>0$ and $d\in\r$. Then, $\cJ_\eps$ has at least
$$\cat\left(\cN_{\eps}(\o)^{\leq d}/\cZ\right)$$
critical $\cZ$-orbits in $\cN_{\eps}(\o)^{\leq d}$.
\end{theorem}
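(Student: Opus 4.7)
The plan is to combine the $\cC^1$ reduction introduced in \cite{csz} with the standard equivariant Lusternik--Schnirelman scheme, keeping in mind that the group $\cZ$ is finite and acts freely on $\cN_\eps(\o)$. The key observation is that $\mathbf{s}\bf u=\bf u$ with all $u_i\neq 0$ forces $s_i=1$ for every $i$, so the $\cZ$-action on $\cN_\eps(\o)$ is free and the quotient $\cN_\eps(\o)/\cZ$ is a reasonable space on which to do critical point theory.

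First I would recall from \cite{csz} the reduction that makes the Nehari-type set amenable to variational methods even when $p<2$: there is a $\cC^1$ Banach manifold $\cT$ (essentially a product of open pieces of unit spheres in $H_0^1(\o)$) and a homeomorphism $\bf m:\cT\to\cN_\eps(\o)$, natural with respect to the $\cZ$-action, such that $\Psi:=\cJ_\eps\circ\bf m$ is of class $\cC^1$, satisfies the Palais--Smale condition (using that $\o$ is bounded and $2p$ is subcritical), and such that $\bf u\in\cN_\eps(\o)$ is a critical point of $\cJ_\eps$ if and only if $\bf m^{-1}(\bf u)$ is a critical point of $\Psi$. Because $\bf m$ intertwines the $\cZ$-actions and $\cJ_\eps$ is $\cZ$-invariant, the reduced functional $\Psi$ is $\cZ$-invariant, and $\bf m$ induces a homeomorphism between sublevel sets and their quotients, so
\[
\cat\bigl(\cN_\eps(\o)^{\leq d}/\cZ\bigr)=\cat\bigl(\cT^{\leq d}/\cZ\bigr).
\]

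Next I would pass to the orbit space. Since $\cZ$ is finite and acts freely and continuously (in fact by linear isometries on the ambient Hilbert space, hence on $\cT$), the quotient map $\cT\to\cT/\cZ$ is a covering, $\cT/\cZ$ inherits a $\cC^1$ Banach manifold structure, and $\Psi$ descends to a $\cC^1$ functional $\widetilde\Psi$ on $\cT/\cZ$. A $\cZ$-equivariant pseudogradient for $\Psi$ is obtained by the classical averaging trick, from which one deduces the Palais--Smale condition for $\widetilde\Psi$ (critical points of $\widetilde\Psi$ in $(\cT/\cZ)^{\leq d}$ correspond exactly to critical $\cZ$-orbits of $\Psi$ in $\cT^{\leq d}$). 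Standard deformation arguments on $\cT/\cZ$ now apply to $\widetilde\Psi$.

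Finally, the classical Lusternik--Schnirelman theorem applied to $\widetilde\Psi$ on the $\cC^1$ manifold $\cT/\cZ$ yields at least $\cat\bigl((\cT/\cZ)^{\leq d}\bigr)$ critical points of $\widetilde\Psi$ in $(\cT/\cZ)^{\leq d}$, each of which lifts via $\bf m$ to a critical $\cZ$-orbit of $\cJ_\eps$ in $\cN_\eps(\o)^{\leq d}$. Combining this with the equality displayed above gives the statement. The only real subtlety is verifying that the reduction of \cite{csz} is genuinely $\cZ$-equivariant and that the deformation lemma passes to the free quotient by a finite group; both are routine once the setup is in place, so I expect no conceptual obstacle beyond carefully quoting the framework of \cite{csz}.
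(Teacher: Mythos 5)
Your proposal is correct and follows essentially the same strategy as the paper: use the $\cC^1$ reduction of \cite{csz} to transfer the problem to the functional $\Psi_\eps=\cJ_\eps\circ\mathbf m_\eps$ on the open set $\cU_\eps\subset S^\ell$, observe that $\mathbf m_\eps$ is $\cZ$-equivariant and induces a homeomorphism of orbit spaces, and then count critical orbits by a Lusternik--Schnirelman argument. The only difference is a matter of presentation: you pass to the free quotient $\cU_\eps/\cZ$ (a covering space) and apply the classical LS theorem there, whereas the paper invokes the equivariant LS theorem of Clapp--Puppe \cite[Theorem~1.1]{cp2} directly on $\cU_\eps$ and then identifies $\mathscr G$-$\cat(\cU_\eps^{\leq d})$ with $\cat(\cU_\eps^{\leq d}/\cZ)$ using freeness of the action. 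For a free action of a finite discrete group these two routes are equivalent, and both require the same inputs (PS, boundedness from below, $\cZ$-equivariance of the reduction).

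One point you should make explicit rather than leave under ``standard deformation arguments'': $\cU_\eps$ is an \emph{open} subset of the product of unit spheres, so a deformation built from the (equivariant) pseudogradient flow of $\Psi_\eps$ could a priori escape through $\partial\cU_\eps$. The paper rules this out by citing \cite[Theorem~3.3(iii)]{csz}, which shows $\Psi_\eps\to\infty$ as one approaches the boundary (hence sublevel sets stay uniformly away from it). Without this remark, the LS machinery on the non-complete manifold $\cU_\eps$ (or on $\cU_\eps/\cZ$) is not justified. With it included, your argument is complete and matches the paper's.
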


\begin{proof}
The proof follows by an adaptation of the argument in \cite[Theorem 3.4]{csz} and well known results in critical point theory with symmetries. For the reader's convenience we sketch it here.

Fix $\eps>0$. Let $S:=\{v\in H^1_0(\o): \|v\|_\eps=1\}$, $\mathcal{T}:=S\times\cdots\times S$ ($\ell$ times) and
\[
\cU_\eps:= \{\mathbf{u}\in \mathcal{T}: \bf t\mathbf{u}\in \cN_{\eps}(\o)\text{ for some }\bf t=(t_1,\ldots,t_\ell)\in(0,\infty)^\ell\}.
\]
For $\bu\in\mathcal{U}_\eps$ there exists a unique $\bf t_\bu\in(0,\infty)^\ell$ such that $\bf t_\bu\bu\in\cN_\eps(\o)$. Let $\mathbf{m}_\eps(\bu) := \bf t_\bu\bu$. Then $\mathbf{m}_\eps: \mathcal{U}_\eps\to\cN_\eps(\o)$ is a homeomorphism. This and more can be seen from \cite[Proposition 3.1]{csz}. Now, let
\[
\Psi_\eps(\bu) := \cJ_\eps(\bf t_\bu\bu),\qquad\bf u\in\cU_\eps.
\]
According to \cite[Theorem 3.3]{csz}, $\Psi_\eps\in \mathcal{C}^1(\cU_\eps,\r)$ and $\bu$ is a critical point of $\Psi_\eps$ if and only if $\mathbf{m}_\eps(\bu)$ is a fully nontrivial critical point of $\cJ_\eps$. Now a standard argument applies. One can construct a pseudogradient vector field for $\Psi_\eps$ on $\cU_\eps$ and obtain a deformation lemma using its flow, as e.g. in \cite[Section 5.3]{w}. By (iii) of \cite[Theorem 3.3]{csz} the flow cannot reach the boundary of $\cU_\eps$.

Since 
\[
\cJ_\eps(\bf u) - \frac1{2p}\cJ_\eps'(\bf u)\bf u =  \left(\frac12 - \frac1{2p}\right)\|\bf u\|_\eps^2,
\]
it is easy to see using the compactness of the embedding $H^1_0(\o) \hookrightarrow L^{2p}(\o)$ that 
 $\cJ_\eps$ satisfies the Palais-Smale condition on $\cN_{\eps}(\o)$ and is bounded from below there. Using \cite[Theorem 3.3]{csz} again, it follows that the same is true of $\Psi_\eps$ on $\cU_\eps$. 
 
Note that $\cU_\eps$ and $\Psi_\eps$ are $\cZ$-invariant and the action of $\cZ$ on $\cT$ is free, that is, the $\cZ$-orbit of every point $\bf u\in\cT$ is $\cZ$-homeomorphic to $\cZ$. So, by \cite[Theorem 1.1] {cp2}, $\Psi_\eps$ has at least $\mathscr{G}$-$\cat(\cU_\eps^{\leq d})$ critical $\cZ$-orbits in 
$$\cU_\eps^{\leq d}:=\{\bf u\in\cU_{\eps}:\Psi_\eps(\bf u)\leq d\},$$
where $\mathscr{G}:=\{\cZ\}$. Since $\cZ$ acts freely on $\cU_\eps^{\leq d}$ one easily verifies that  
$$\mathscr{G}\text{-}\cat(\cU_\eps^{\leq d})=\cat(\cU_\eps^{\leq d}/\cZ),$$
and, as $\bf m_\eps$ induces a homeomorphism of the $\cZ$-orbit spaces, one has that $\cat(\cU_\eps^{\leq d}/\cZ)=\cat(\cN_\eps^{\leq d}/\cZ)$. It follows that $\cJ_\eps$ has at least $\cat\left(\cN_{\eps}(\o)^{\leq d}/\cZ\right)$
critical $\cZ$-orbits in $\cN_{\eps}(\o)^{\leq d}$, as claimed.
\end{proof}

\section{The sign of low energy solutions}\label{sec:sign}	
	
For each $i=1,\ldots,\ell$, consider the problem
\begin{equation} \label{eq:limit_problem}
\begin{cases}
-\Delta w+w=\beta_{ii}|w|^{2p-2}w,\\
w\in H^1(\rn), \quad w\neq 0.
\end{cases}
\end{equation}
Its solutions are the critical points of the functional $J_{\infty,i}:H^1(\rn)\to\r$ given by
$$J_{\infty,i}(w):=\frac{1}{2}\irn(|\nabla w|^2+w^2)-\frac{1}{2p}\irn\beta_{ii}|w|^{2p}$$
on the Nehari manifold \ $\cM_{\infty,i}:=\{w\in H^1(\rn):w\neq 0, \ J_{\infty,i}'(w)w=0\}$. \ We set
$$\mathfrak{c}_{\infty,i}:= \inf_{w\in\cM_{\infty,i}}J_{\infty,i}(w).$$
A solution $\omega_i$ to this problem that satisfies $J_{\infty,i}(\omega_i)=\mathfrak{c}_{\infty,i}$ is called a \emph{least energy solution}. It is known to be radially symmetric up to translation, see \cite{bel,gnn} or \cite[Appendix C]{w}.

Recall that the energy functional for the system \eqref{eq:system} in $\rn$ with $\eps=1$ is $\cJ_1:H^1(\rn)^\ell\to\r$ given by
\begin{align*}
\cJ_1(\bf u):= \frac{1}{2}\sum_{i=1}^\ell\irn\left(|\nabla u_i|^2 + u_i^2\right) - \frac{1}{2p}\sum_{i,j=1}^\ell\irn\beta_{ij}|u_j|^p|u_i|^p,
\end{align*}
the Nehari set is \ $\cN_{1}(\mathbb R^N):=\Big\{\bf u\in H^1(\mathbb R^N)^\ell:u_i\neq 0, \ \partial_i\cJ_1(\bf u)u_i=0, \ \forall \ i = 1,\ldots,\ell\Big\}$, and
\begin{equation*}
c_1(\rn):= \inf_{\bf u\in\cN_1(\rn)}\cJ_1(\bf u).
\end{equation*}

\begin{lemma} \label{lem:limit_energy}
$c_1(\rn)=\sum_{i=1}^\ell \mathfrak{c}_{\infty,i}=\displaystyle\lim_{\eps\to 0}c_\eps(\o)$, and $c_1(\rn)$ is not attained if $\ell\geq 2$. 
\end{lemma}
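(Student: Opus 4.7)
I would prove the equality $c_1(\rn)=\sum_i\mathfrak{c}_{\infty,i}$ by a two-sided estimate, extract non-attainment from the rigidity of the lower bound, and finally obtain the limit by a rescaling argument. The lower bound uses only the Nehari identity: for $\bf u\in\cN_1(\rn)$ one has $\cJ_1(\bf u)=\big(\tfrac12-\tfrac{1}{2p}\big)\sum_{i=1}^\ell\|u_i\|_{H^1(\rn)}^2$. Since $\beta_{ij}<0$ for $i\neq j$, the Nehari relations yield $\|u_i\|^2=\sum_j\beta_{ij}\irn|u_j|^p|u_i|^p\leq \beta_{ii}\irn|u_i|^{2p}$, so the unique $t_i>0$ with $t_iu_i\in\cM_{\infty,i}$ satisfies $t_i\leq 1$. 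Hence $\mathfrak{c}_{\infty,i}\leq J_{\infty,i}(t_iu_i)=\big(\tfrac12-\tfrac{1}{2p}\big)t_i^2\|u_i\|^2\leq\big(\tfrac12-\tfrac{1}{2p}\big)\|u_i\|^2$, and summing over $i$ gives $c_1(\rn)\geq\sum_i\mathfrak{c}_{\infty,i}$. If some $\bf u^*\in\cN_1(\rn)$ achieves this bound, equality throughout forces $t_i=1$, i.e.\ $\|u_i^*\|^2=\beta_{ii}\irn|u_i^*|^{2p}$; the Nehari identity then collapses to $\sum_{j\neq i}\beta_{ij}\irn|u_j^*|^p|u_i^*|^p=0$, contradicting the strict negativity of each summand when $\ell\geq 2$.

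For the matching upper bound, I would fix distinct unit vectors $e_1,\ldots,e_\ell\in\rn$ and, for large $R>0$, consider the translated radial least-energy solutions $\omega_i^R:=\omega_i(\cdot-Re_i)$ of \eqref{eq:limit_problem}. Their exponential decay at infinity makes the cross integrals $\irn|\omega_i^R|^p|\omega_j^R|^p$ tend to $0$ as $R\to\infty$ whenever $i\neq j$, so the Nehari projection of $(\omega_1^R,\ldots,\omega_\ell^R)$ onto $\cN_1(\rn)$ exists for $R$ large, converges to $(1,\ldots,1)$, and the resulting $\cJ_1$-values tend to $\sum_i J_{\infty,i}(\omega_i)=\sum_i\mathfrak{c}_{\infty,i}$.

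For the limit, the change of variable $v_i(y)=u_i(\eps y)$ rewrites $\cJ_\eps$ on $\o$ as the $\eps=1$ functional on the dilated domain $\Omega_\eps:=\eps^{-1}\o$, so $c_\eps(\o)=c_1(\Omega_\eps)$. Extension by zero embeds $\cN_1(\Omega_\eps)\hookrightarrow\cN_1(\rn)$, proving $c_\eps(\o)\geq c_1(\rn)$. Conversely, any ball $B(x_0,\rho)\subset\o$ gives $B(x_0/\eps,\rho/\eps)\subset\Omega_\eps$, which is eventually large enough to house the separated configuration built above; truncating it with a smooth cutoff and re-projecting onto $\cN_1(\Omega_\eps)$ produces test functions whose energies converge to $\sum_i\mathfrak{c}_{\infty,i}$, giving $\limsup_{\eps\to 0}c_\eps(\o)\leq c_1(\rn)$.

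The main technical obstacle is the control of the coupled Nehari projection $(t_1^R,\ldots,t_\ell^R)$ in the presence of negative cross couplings, since it is governed by a nonlinear system of $\ell$ equations rather than by $\ell$ decoupled ones. Exponential separation, however, reduces the system to an $o(1)$-perturbation of its decoupled counterpart (which has the solution $t_i=1$), and a routine implicit-function or fixed-point argument delivers the required convergence $t_i^R\to 1$. Once this is settled, both the truncation step in $\Omega_\eps$ and the passage $c_1(\Omega_\eps)\to c_1(\rn)$ are standard.
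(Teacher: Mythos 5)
The paper does not reprove this lemma; it simply cites \cite[Proposition~3.1 and Lemma~4.2]{cs} with $G$ trivial. Your self-contained argument (lower bound via the Nehari identity, upper bound via exponentially separated ground-state profiles with a coupled Nehari reprojection, non-attainment by rigidity, limit via the dilation $c_\eps(\o)=c_1(\eps^{-1}\o)$ together with extension by zero and cutoff) is the natural route and matches in spirit the proofs being cited, and most of the steps you sketch are sound.

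There is, however, a genuine gap in the non-attainment step. After deducing $\sum_{j\neq i}\beta_{ij}\irn|u_j^*|^p|u_i^*|^p=0$ for each $i$, you claim a contradiction with ``the strict negativity of each summand.'' But a priori each summand is only $\leq 0$: it vanishes precisely when $u_i^*$ and $u_j^*$ have essentially disjoint supports, and nothing in your argument up to that point rules this out. To close the gap you must also observe that equality in the chain $\mathfrak{c}_{\infty,i}\leq J_{\infty,i}(t_i u_i^*)\leq\bigl(\tfrac12-\tfrac{1}{2p}\bigr)\|u_i^*\|^2$ forces $u_i^*$ (with $t_i=1$) to be a least energy solution of the scalar problem \eqref{eq:limit_problem}; by the classical uniqueness and symmetry results \cite{bel,gnn}, $u_i^*$ is then, up to sign and translation, the radial profile $\omega_i$, which has full support, so in fact $\irn|u_i^*|^p|u_j^*|^p>0$ for $i\neq j$ and the contradiction does follow. (Trying to get positivity of $|u_i^*|$ directly from the maximum principle on the coupled Euler--Lagrange system is awkward here because the term $|u_i|^{p-2}u_i$ is not Lipschitz at zero when $p<2$, so the ground-state characterization is the cleaner route.)
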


\begin{proof}
These are the statements of \cite[Proposition 3.1 and Lemma 4.2]{cs} with $G$ equal to the trivial group. Note that for such $G$ the set $\mathrm{Fix}(G) := \{x\in\rn: gx=x \text{ for all }g\in G\}$ has positive dimension (in fact, $\mathrm{Fix}(G) = \rn$). This is needed in order to assure that $c_1(\rn)$ equals the sum above.
\end{proof}

The following result describes the behavior of minimizing sequences for the system \eqref{eq:system} in $\rn$ with $\eps=1$. We write $B_1(x)$ for the unit ball in $\rn$ centered at $x$.

\begin{theorem} \label{thm:profiles}
 Let $\ell \geq 2$ and $\mathbf{w}_k=(w_{1 k}, \ldots, w_{\ell k}) \in \mathcal{N}_{1}(\mathbb R^N)$ be such that $\cJ_1(\bf w_k)\to c_1(\rn)$. Then, there exist a positive number $\theta$ and, for each $i=1, \ldots, \ell$, a sequence $\left(\zeta_{i k}\right)$ in $\rn$ and a least energy radial solution $\omega_i$ to the problem \eqref{eq:limit_problem} such that, after passing to a subsequence,
\begin{itemize}
\item[$(i)$] $\int_{B_1(\zeta_{ik})}\beta_{ii}|w_{ik}|^{2p}\geq\theta>0,$
\item[$(ii)$] $\lim _{k\to\infty}\left|\zeta_{i k}-\zeta_{j k}\right|=\infty$ \ if \ $i\neq j$,
\item[$(iii)$] $\lim _{k \rightarrow \infty}\left\|w_{i k}-\omega_i\left(\cdot-\zeta_{i k}\right)\right\|_1=0$,
\end{itemize}
for $i,j=1,\ldots,\ell$. Furthermore, if $w_{i k} \geq 0$ for all $k \in \mathbb{N}$, then $\omega_i \geq 0$.
\end{theorem}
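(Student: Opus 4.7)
The plan is to run a concentration--compactness argument in the spirit of \cite[Lemma 4.2]{cs}. First I would establish that $(\bf w_k)$ is bounded in $H^1(\rn)^\ell$ with each component bounded away from zero. On $\cN_1(\rn)$ one has $\cJ_1(\bf w_k)=\frac{p-1}{2p}\sum_i\|w_{ik}\|_1^2$, which gives the boundedness; the Nehari constraint together with $\beta_{ij}<0$ for $i\neq j$ implies $\|w_{ik}\|_1^2\le\beta_{ii}\|w_{ik}\|_{L^{2p}}^{2p}$, which via Sobolev forces $\|w_{ik}\|_1\ge c>0$. Then I would apply Ekeland's variational principle to $\Psi_1:\cU_1\to\r$ (via the Nehari framework of \cite{csz}) to arrange that $(\bf w_k)$ is in addition a Palais--Smale sequence for $\cJ_1$ at the level $c_1(\rn)$.

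Since $\int\beta_{ii}|w_{ik}|^{2p}\ge\|w_{ik}\|_1^2\ge c$, Lions' non-vanishing lemma provides points $\zeta_{ik}\in\rn$ and $\theta>0$ with $\int_{B_1(\zeta_{ik})}\beta_{ii}|w_{ik}|^{2p}\ge\theta$, which is (i). Setting $\tilde w_{ik}(x):=w_{ik}(x+\zeta_{ik})$, a subsequence satisfies $\tilde w_{ik}\rh\omega_i$ in $H^1(\rn)$, with $\omega_i\neq 0$ by (i) and local $L^{2p}$-compactness.

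The hard part is (ii), which I would prove by contradiction. If some $|\zeta_{ik}-\zeta_{jk}|$ stays bounded, a further subsequence lets me partition $\{1,\ldots,\ell\}=I_1\sqcup\cdots\sqcup I_m$ with $m<\ell$ so that $\zeta_{ik}-\zeta_{jk}$ converges when $i,j$ lie in the same cluster and diverges otherwise. For each cluster $I_a$ fix $i_a\in I_a$ and translate by $\zeta_{i_a k}$: the subvector $(w_{ik}(\cdot+\zeta_{i_a k}))_{i\in I_a}$ converges weakly to a fully nontrivial limit $\tilde{\bf w}_a$, while for $j\notin I_a$ the shifted $w_{jk}(\cdot+\zeta_{i_a k})$ converges weakly to zero and strongly to zero in $L^{2p}_{\mathrm{loc}}$. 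This kills all cross-cluster coupling terms, so the PS condition forces $\tilde{\bf w}_a$ to be a fully nontrivial critical point of the sub-system functional $\cJ_1^{I_a}$ obtained by restricting $\cJ_1$ to the components indexed by $I_a$. A Brezis--Lieb decomposition combined with Lemma \ref{lem:limit_energy} applied to each sub-system gives
$$c_1(\rn)=\lim\cJ_1(\bf w_k)\ge\sum_{a=1}^m\cJ_1^{I_a}(\tilde{\bf w}_a)\ge\sum_{a=1}^m\sum_{i\in I_a}\mathfrak{c}_{\infty,i}=c_1(\rn),$$
forcing equality throughout. Since $m<\ell$, some $|I_a|\ge 2$, and equality then forces its infimum to be attained at $\tilde{\bf w}_a$, contradicting the non-attainment part of Lemma \ref{lem:limit_energy} applied to the sub-system.

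With (ii) secured, testing $\partial_i\cJ_1(\bf w_k)\to 0$ against $\varphi(\cdot-\zeta_{ik})$ for $\varphi\in C_c^\infty(\rn)$ makes cross-coupling terms with $j\neq i$ vanish in the limit (by (ii) and Hölder), so each $\omega_i$ solves \eqref{eq:limit_problem} and lies in $\cM_{\infty,i}$. The chain
$$c_1(\rn)=\sum_i\mathfrak{c}_{\infty,i}\le\sum_i J_{\infty,i}(\omega_i)\le\liminf\cJ_1(\bf w_k)=c_1(\rn)$$
forces $J_{\infty,i}(\omega_i)=\mathfrak{c}_{\infty,i}$, so $\omega_i$ is a least energy solution, hence radial up to a translation which I absorb into $\zeta_{ik}$. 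The same equality yields $\|\tilde w_{ik}\|_1\to\|\omega_i\|_1$, which combined with weak convergence in the Hilbert space $H^1$ upgrades to strong convergence, giving (iii). Finally, nonnegativity of $w_{ik}$ is preserved under translation and strong $H^1$ limits, so $\omega_i\ge 0$.
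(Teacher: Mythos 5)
The paper does not prove Theorem~\ref{thm:profiles} itself; it is quoted verbatim from \cite[Theorem 3.3]{cs} with $G$ trivial, so there is no in-paper proof to compare against. Your proposal follows the expected concentration--compactness outline, and the preliminaries are fine: $\cJ_1(\mathbf w_k)=\frac{p-1}{2p}\|\mathbf w_k\|_1^2$ on $\cN_1(\rn)$, the Sobolev lower bound $\|w_{ik}\|_1\ge c>0$, Ekeland to pass to a Palais--Smale sequence, and Lions' lemma for $(i)$ are all correct (modulo the standard remark that the Ekeland replacement is $o(1)$-close to $\mathbf w_k$, so the conclusions transfer).

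The gap is in $(ii)$, and it is a genuine one of logical order. You assert flatly that for $j\notin I_a$ the translates $w_{jk}(\cdot+\zeta_{i_a k})$ converge weakly to $0$, and then infer that the cluster limit $\tilde{\mathbf w}_a$ is a fully nontrivial critical point of the decoupled sub-system $\cJ_1^{I_a}$. But a priori $w_{jk}$ could carry mass both near $\zeta_{jk}$ and near a foreign cluster center $\zeta_{i_a k}$; nothing in Lions' lemma or the clustering excludes this. The vanishing is \emph{true}, but it is a consequence of the energy chain you write afterwards, so as stated the argument is circular. The correct order is: pass to the weak limit of the translated PS sequence and obtain a critical point $\hat{\mathbf w}_a$ of the \emph{full} functional $\cJ_1$ in $\rn$ whose $I_a$-components are nonzero but whose remaining components may or may not vanish; set $I_a':=\{j:\hat w^{(a)}_j\neq 0\}\supseteq I_a$; use the global-compactness/profile decomposition (not just the $m$ clusters — these need not exhaust the mass, and the remainder must be shown to be $L^{2p}$-vanishing before one may discard it in the energy split) to get $c_1(\rn)=\sum_a\cJ_1^{I_a'}(\hat{\mathbf w}_a|_{I_a'})\ge\sum_a\sum_{i\in I_a'}\mathfrak{c}_{\infty,i}\ge\sum_i\mathfrak{c}_{\infty,i}=c_1(\rn)$; the forced equality then proves $I_a'=I_a$ (that is, the vanishing), shows each $\hat{\mathbf w}_a|_{I_a}$ attains $c_1^{I_a}(\rn)$, and non-attainment in Lemma~\ref{lem:limit_energy} for $|I_a|\ge 2$ forces every cluster to be a singleton — which is $(ii)$. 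Once $(ii)$ is in place, your treatment of $(iii)$ (norm convergence plus weak convergence in a Hilbert space gives strong convergence) and of nonnegativity is correct.
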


\begin{proof}
This is the statement of \cite[Theorem 3.3]{cs} when $G$ is the trivial group. The inequality $(i)$ is marked as (3.2) in the proof of that result.
\end{proof}

\begin{proposition} \label{prop:sign}
There exist $\eps_0>0$ and $d>c_1(\rn)$ such that, if $\eps\in(0,\eps_0)$ and $\bf u=(u_1,\ldots,u_\ell)$ is a critical point of $\cJ_\eps$ with $\cJ_\eps(\bf u)\leq d$, then $u_i$ does not change sign for any $i=1,\ldots,\ell$.
\end{proposition}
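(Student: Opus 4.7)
The plan is to prove the quantitative statement: if $\bf u$ is a critical point of $\cJ_\eps$ with a sign-changing component $u_{i_0}$, then $\cJ_\eps(\bf u)\ge c_1(\rn)+\mathfrak{c}_{\infty,i_0}$. Setting $d:=c_1(\rn)+\tfrac12\min_i\mathfrak{c}_{\infty,i}$ (which strictly exceeds $c_1(\rn)$ since each $\mathfrak{c}_{\infty,i}>0$) then immediately contradicts $\cJ_\eps(\bf u)\le d$. The argument turns out to be uniform in $\eps$, so any $\eps_0>0$ works.

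To execute this, first rescale: put $\tilde u_i(y):=u_i(\eps y)$ on $\eps^{-1}\o$, extended by zero to $\rn$. A routine change of variables gives $\cJ_\eps(\bf u)=\cJ_1(\tilde{\bf u})$ and exhibits $\tilde{\bf u}$ as a Dirichlet critical point of the unscaled system on $\eps^{-1}\o$. Assuming $\tilde u_{i_0}$ changes sign, I test the $i_0$-th equation separately against $\tilde u_{i_0}^\pm:=\max\{\pm\tilde u_{i_0},0\}\in H^1_0(\eps^{-1}\o)$. Discarding the non-positive cross terms (using $\beta_{i_0 j}<0$ for $j\ne i_0$) gives
\[
\|\tilde u_{i_0}^\pm\|_{H^1(\rn)}^2\le\beta_{i_0 i_0}\int_{\rn}(\tilde u_{i_0}^\pm)^{2p},
\]
which is precisely the condition permitting a projection of $\tilde u_{i_0}^\pm$ onto the scalar Nehari manifold $\cM_{\infty,i_0}$ with parameters $t_\pm\in(0,1]$. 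The least-energy bound
\[
\mathfrak{c}_{\infty,i_0}\le J_{\infty,i_0}(t_\pm\tilde u_{i_0}^\pm)=\tfrac{p-1}{2p}\,t_\pm^2\,\|\tilde u_{i_0}^\pm\|^2\le\tfrac{p-1}{2p}\|\tilde u_{i_0}^\pm\|^2
\]
then forces $\|\tilde u_{i_0}^\pm\|^2\ge\tfrac{2p}{p-1}\mathfrak{c}_{\infty,i_0}$ and, by adding (disjoint supports), $\|\tilde u_{i_0}\|^2\ge\tfrac{4p}{p-1}\mathfrak{c}_{\infty,i_0}$. The identical projection, applied without splitting to each remaining component, yields $\|\tilde u_i\|^2\ge\tfrac{2p}{p-1}\mathfrak{c}_{\infty,i}$ for $i\ne i_0$.

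Combining these with the Nehari identity $\cJ_1(\tilde{\bf u})=\tfrac{p-1}{2p}\sum_i\|\tilde u_i\|^2$ produces $\cJ_\eps(\bf u)\ge c_1(\rn)+\mathfrak{c}_{\infty,i_0}>d$, the desired contradiction. The main technical point to verify is that the scalar Nehari projection is legitimate: $\tilde u_{i_0}^\pm$ lives a priori only in $H^1_0(\eps^{-1}\o)$, but its zero-extension to $\rn$ preserves the Dirichlet norm, so the identity defining $\cM_{\infty,i_0}$ in $H^1(\rn)$ and the estimate $J_{\infty,i_0}(\cdot)\ge\mathfrak{c}_{\infty,i_0}$ apply unambiguously. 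A pleasant feature of this approach is that no use is made of the concentration result of Theorem \ref{thm:profiles}; the bound is purely a Nehari-projection computation.
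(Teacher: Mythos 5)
Your proof is correct, and it is a genuinely different (and in some respects stronger) argument than the one in the paper. The paper proceeds by contradiction with a sequence $\eps_k\to 0$, $d_k\to c_1(\rn)$, applies the concentration result Theorem~\ref{thm:profiles} to the rescaled minimizing sequence, and uses statements $(ii)$--$(iii)$ of that theorem to conclude that the cross terms $\sum_{j\neq 1}\int\beta_{1j}|w_{jk}|^p|w_{1k}^\pm|^p$ tend to $0$; this is what yields $t_k,s_k\to 1$ and hence the asymptotic identity $J_{\infty,1}(t_kw_{1k}^\pm)=J_{\infty,1}(w_{1k}^\pm)+o(1)$, from which the contradiction follows. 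You instead make the elementary but decisive observation that the sign of the cross terms ($\beta_{i_0j}<0$) alone already forces the Nehari projection factor $t_\pm$ into $(0,1]$, whence $J_{\infty,i_0}(t_\pm\tilde u_{i_0}^\pm)=\tfrac{p-1}{2p}t_\pm^2\|\tilde u_{i_0}^\pm\|^2\le\tfrac{p-1}{2p}\|\tilde u_{i_0}^\pm\|^2$ holds \emph{exactly}, not merely asymptotically. Combined with the analogous bound for the other components and the Nehari identity $\cJ_1(\tilde{\bf u})=\tfrac{p-1}{2p}\sum_i\|\tilde u_i\|^2$, this gives the clean quantitative bound $\cJ_\eps(\bf u)\ge c_1(\rn)+\mathfrak c_{\infty,i_0}$ for any sign-changing critical point. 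What this buys you: the bound is uniform in $\eps$ (so $\eps_0$ can be taken arbitrary, a modest strengthening), the concentration machinery of Theorem~\ref{thm:profiles} is bypassed entirely, and $d$ is explicit. One small caveat that applies equally to the paper's proof: the argument uses $u_i\neq 0$ for all $i$ when projecting the other components onto their scalar Nehari manifolds, i.e.\ it implicitly assumes $\bf u\in\cN_\eps(\o)$; this is consistent with how the proposition is invoked in the proof of Theorem~\ref{thm:main}.
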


\begin{proof}
We argue by contradiction. Assume there are numbers $\eps_k>0$ and $d_k>c_1(\rn)$ and critical points $\bf u_k=(u_{1 k}, \ldots, u_{\ell k})$ of $\cJ_{\eps_k}$ such that $\eps_k\to 0$, $d_k\to c_1(\rn)$, $\bf u_k\in\cN_{\eps_k}(\o)$,  $\cJ_{\eps_k}(\bf u_k)\leq d_k$, and $u_{ik}^+:=\max\{u_{ik},0\}\neq 0$ and $u_{ik}^-:=\min\{u_{ik},0\}\neq 0$ for some $i=1,\ldots,\ell$. Without loss of generality, we assume that $i=1$.

Let $\bf w_k=(w_{1 k}, \ldots, w_{\ell k})$ be given by $w_{ik}(z):=u_{ik}(\eps_kz)$ if $z\in\o_k:=\{\eps_kx:x\in\o\}$ and $w_{ik}(z):=0$ if $z\in\rn\smallsetminus\o_k$. Then $\bf w_k\in\cN_1(\rn)$ and $\cJ_1(\bf w_k)=\cJ_{\eps_k}(\bf u_k)\to c_1(\rn)$. By Lemma~\ref{lem:limit_energy} and Theorem~\ref{thm:profiles},
\begin{align*}
J_{\infty,i}(w_{ik}) = J_{\infty,i}(w_{ik}(\cdot +\zeta_{ik})) \to J_{\infty,i}(\omega_{i})=\mathfrak{c}_{\infty,i}\qquad \text{ and }\qquad
\cJ_1(\bf w_k)\to c_1(\rn)=\displaystyle\sum_{i=1}^\ell \mathfrak{c}_{\infty,i},
\end{align*}
where $\omega_i$ is a least energy solution to the problem \eqref{eq:limit_problem}. In particular,
\begin{align*}
\lim_{k\to\infty}\sum\limits_{\substack{i,j=1 \\ i\neq j}}^\ell\int_{\rn}\beta_{ij}|w_{ik}|^p|w_{jk}|^p=2p
\lim_{k\to\infty}\left(\sum_{i=1}^\ell J_{\infty,i}(w_{ik})-\cJ_1(\bf w_k)\right)=0.
\end{align*}
Since $\bf u_k$ is a critical point of $\cJ_{\eps_k}$, one has that $\partial_1\cJ_1(\bf w_k)w_{1k}^+=\partial_1\cJ_{\eps_k}(\bf u_k)u_{1k}^+=0$. Hence
 \begin{align} \label{neh}
\int_{\rn}(|\nabla w_{1k}^+|^2 + |w_{1k}^+|^2-\beta_{11}|w_{1k}^+|^{2p})=\sum_{j=2}^\ell\int_{\rn}\beta_{1j}|w_{jk}|^p|w_{1k}^+|^{p}\to 0,
 \end{align}
and similarly for $w_{1k}^-$ (that the right-hand side above tends to 0 follows from (ii) and (iii) of Theorem \ref{thm:profiles}). Set
$$t_k:=\left(\frac{\irn(|\nabla w_{1k}^+|^2 + |w_{1k}^+|^2)}{\irn\beta_{11}|w_{1k}^+|^{2p}}\right)^\frac{1}{2p-2}\text{ \ and \ }s_k:=\left(\frac{\irn(|\nabla w_{1k}^-|^2 + |w_{1k}^-|^2)}{\irn\beta_{11}|w_{1k}^-|^{2p}}\right)^\frac{1}{2p-2}.$$
Then $t_k w_{1k}^+,s_k w_{1k}^-\in \cM_{\infty,1}$, $t_k\to 1$ and $s_k\to 1$. Here we have used \eqref{neh} and the fact that $(w_{1k}^\pm,w_{2k},\ldots,w_{\ell k})\in \cN_1(\rn)$ which implies in particular that $w_{1k}^\pm$ are bounded away from 0 \cite[Proposition 3.1(d)]{csz}.
 Thus, by the definition of $\mathfrak{c}_{\infty,1}$,
 \begin{align*}
\cJ_1(\bf w_k)&=\sum_{i=1}^\ell \mathfrak{c}_{\infty,i}+o(1)=\sum_{i=1}^\ell J_{\infty,i}(w_{ik})+o(1)=J_{\infty,1}(w_{1k}^+)+J_{\infty,1}(w_{1k}^-)+\sum_{i=2}^\ell J_{\infty,i}(w_{ik})+o(1)\\
  &=J_{\infty,1}(t_k w_{1k}^+)+J_{\infty,1}(s_k w_{1k}^-)+\sum_{i=2}^\ell J_{\infty,i}(w_{ik})+o(1)\geq 2\mathfrak{c}_{\infty,1}+\sum_{i=2}^\ell \mathfrak{c}_{\infty,i}+o(1)\\
  &=\mathfrak{c}_{\infty,1}+\cJ_1(\bf w_k)+o(1),
 \end{align*}
which is a contradiction.
\end{proof}

\section{The effect of the configuration space}\label{sec:cs}

Fix $r\in(0,\infty)$ and let $B_r$ be the ball of radius $r$ in $\rn$ centered at $0$. For each $i=1,\ldots,\ell$, consider the problem
\begin{equation} \label{eq:B_r}
\begin{cases}
-\eps^2\Delta w+w=\beta_{ii}|w|^{2p-2}w,\\
w\in H^1_0(B_r), \quad w\neq 0.
\end{cases}
\end{equation}
Its solutions are the critical points of the functional $J_{\eps,r,i}:H^1_0(B_r)\to\r$ given by
$$J_{\eps,r,i}(w):=\frac{1}{2\eps^N}\int_{B_r}(\eps^2|\nabla w|^2+w^2)-\frac{1}{2p\eps^N}\int_{B_r}\beta_{ii}|w|^{2p}$$
on the Nehari manifold $\cM_{\eps,r,i}:=\{w\in H^1_0(B_r):w\neq 0, \ J_{\eps,r,i}'(w)w=0\}$. We set
$$\mathfrak{c}_{\eps,r,i}:= \inf_{w\in\cM_{\B{\eps,}r,i}}J_{\eps,r,i}(w).$$
A solution $w_{\eps,r,i}$ satisfying $J_{\eps,r,i}(w_{\eps,r,i})=\mathfrak{c}_{\eps,r,i}$ is called a \emph{least energy solution}.

\begin{lemma} \label{lem:limit_energy2}
$\displaystyle\lim_{\eps\to 0}\mathfrak{c}_{\eps,r,i} =\displaystyle\mathfrak{c}_{\infty,i}$ for each $i=1,\ldots,\ell$. 
\end{lemma}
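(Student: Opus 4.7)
The plan is to convert the problem on $B_r$ with small parameter $\eps$ into a problem without $\eps$ on the large ball $B_{r/\eps}$ via the scaling $v(x):=w(\eps x)$. A direct change of variables shows that this map sends $H^1_0(B_r)$ bijectively onto $H^1_0(B_{r/\eps})$, sends $\cM_{\eps,r,i}$ onto the Nehari set $\{v\in H^1_0(B_{r/\eps}):v\neq 0,\ \int_{B_{r/\eps}}(|\nabla v|^2+v^2)=\int_{B_{r/\eps}}\beta_{ii}|v|^{2p}\}$, and transforms the energy into $J_{\eps,r,i}(w)=\frac{1}{2}\int_{B_{r/\eps}}(|\nabla v|^2+v^2)-\frac{1}{2p}\int_{B_{r/\eps}}\beta_{ii}|v|^{2p}$. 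Hence $\mathfrak{c}_{\eps,r,i}$ equals the Nehari ground state level of the unscaled problem on the expanding domain $B_{r/\eps}$, and since this domain exhausts $\rn$ as $\eps\to 0$, the claim reduces to convergence of Nehari levels on nested balls to the level on $\rn$.

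For the lower bound I would observe that any competitor $v\in H^1_0(B_{r/\eps})$ satisfying the above Nehari identity extends by zero to an element of $H^1(\rn)$ for which that identity is exactly the one defining $\cM_{\infty,i}$ and for which $J_{\infty,i}(v)$ equals the rescaled energy. Thus every competitor for the rescaled problem is a competitor for $\mathfrak{c}_{\infty,i}$, yielding $\mathfrak{c}_{\eps,r,i}\ge \mathfrak{c}_{\infty,i}$ for every $\eps>0$.

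For the matching upper bound I would build a concentrating test function from the limit ground state. Pick a radial least energy solution $\omega_i$ of \eqref{eq:limit_problem}, which is known to decay exponentially together with its gradient. Introduce a smooth cutoff $\eta_\eps$ with $\eta_\eps\equiv 1$ on $B_{r/(2\eps)}$, $\supp\eta_\eps\subset B_{r/\eps}$ and $|\nabla\eta_\eps|\le C\eps/r$. The exponential decay of $\omega_i$ then forces $\eta_\eps\omega_i\to\omega_i$ in $H^1(\rn)$ and in $L^{2p}(\rn)$. Since $\omega_i\in\cM_{\infty,i}$ and $\int_{\rn}\beta_{ii}|\omega_i|^{2p}>0$, there is a unique $t_\eps>0$ such that $t_\eps\eta_\eps\omega_i$ satisfies the Nehari identity on $B_{r/\eps}$, given explicitly by $t_\eps^{2p-2}=\|\eta_\eps\omega_i\|_{H^1}^2/\int_{\rn}\beta_{ii}|\eta_\eps\omega_i|^{2p}$, and the convergences just stated force $t_\eps\to 1$. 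Unrescaling produces $w_\eps\in\cM_{\eps,r,i}$ with $J_{\eps,r,i}(w_\eps)=J_{\infty,i}(t_\eps\eta_\eps\omega_i)\to J_{\infty,i}(\omega_i)=\mathfrak{c}_{\infty,i}$, giving $\limsup_{\eps\to 0}\mathfrak{c}_{\eps,r,i}\le \mathfrak{c}_{\infty,i}$.

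The only nontrivial point is the $H^1$-convergence $\eta_\eps\omega_i\to\omega_i$, but this is a routine consequence of the well-known exponential decay of radial ground states of \eqref{eq:limit_problem} recalled just before the lemma. Once that convergence is in hand, the scaling-induced bijective correspondence between Nehari manifolds delivers both bounds essentially for free.
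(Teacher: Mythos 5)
Your argument is correct, but the route differs from the paper's. The paper simply invokes Lemma~\ref{lem:limit_energy} with $\ell=1$ and $\o=B_r$: for a single equation on $B_r$ the quantity $c_\eps(B_r)$ is exactly $\mathfrak{c}_{\eps,r,i}$, so the convergence $c_\eps(\o)\to c_1(\rn)=\sum_i\mathfrak{c}_{\infty,i}$ specializes immediately to $\mathfrak{c}_{\eps,r,i}\to\mathfrak{c}_{\infty,i}$; Lemma~\ref{lem:limit_energy} in turn is a direct citation to results in \cite{cs}. You instead give a self-contained proof: rescale by $v(x)=w(\eps x)$ to identify $\mathfrak{c}_{\eps,r,i}$ with the $\eps$-free Nehari level on the expanding ball $B_{r/\eps}$, get the lower bound $\mathfrak{c}_{\eps,r,i}\ge\mathfrak{c}_{\infty,i}$ by extension by zero, and get the matching upper bound from truncated ground-state test functions $t_\eps\eta_\eps\omega_i$ using the exponential decay of $\omega_i$. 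This is entirely standard and correct; the only cosmetic inaccuracy is that the paper recalls radial symmetry (not explicitly exponential decay) of $\omega_i$ just before the lemma, though the decay is of course classical and readily available. The trade-off is the usual one: the paper's one-line reduction keeps the presentation short by leaning on the more general limit result already established, while your version makes the mechanism behind the convergence transparent and independent of \cite{cs}.
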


\begin{proof}
This follows from Lemma~\ref{lem:limit_energy}, taking $\o=B_r$ and $\ell=1$.
\end{proof}

For $r>0$ let
$$F_{\ell,r}(\o):=\{(\xi_1,\ldots,\xi_\ell)\in\o^\ell:\dist(\xi_i,\rn\smallsetminus\o)> r, \ |\xi_i-\xi_j|> 2r\text{ if }i\neq j\},$$
endowed with the subspace topology of $\o^\ell$. Note that $F_{\ell,r}(\o)\neq\emptyset$ if $r$ is sufficiently small. Define $\bf i_\eps:F_{\ell,r}(\o)\to \cN_{\eps}(\o)$ by
\begin{align}\label{i:def}
\bf i_\eps(\xi_1,\ldots,\xi_\ell):=\left(\omega_{\eps,r,1}(\,\cdot\,-\xi_1),\ldots,\omega_{\eps,r,\ell}(\,\cdot\,-\xi_\ell)\right)
\end{align}
where $\omega_{\eps,r,i}$ is the positive least energy solution to the problem \eqref{eq:B_r} extended by 0 to $\rn\smallsetminus\o$. 

\begin{lemma} \label{lem:iota}
Given $r>0$ and $d>\sum_{i=1}^\ell \mathfrak{c}_{\infty,i}$, there exists $\widetilde\eps_{r,d}>0$ such that
$$\cJ_\eps(\bf i_\eps(\xi_1,\ldots,\xi_\ell))\leq d\qquad\text{for every \ }(\xi_1,\ldots,\xi_\ell)\in F_{\ell,r}(\o)\text{ \ and \ }\eps\in (0,\widetilde\eps_{r,d}).$$
\end{lemma}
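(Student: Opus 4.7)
The plan is to exploit the disjointness of supports built into the definition of $F_{\ell,r}(\o)$: if $(\xi_1,\ldots,\xi_\ell)\in F_{\ell,r}(\o)$, then the balls $B_r(\xi_i)$ are pairwise disjoint (because $|\xi_i-\xi_j|>2r$) and all contained in $\o$ (because $\dist(\xi_i,\rn\smallsetminus\o)>r$). Since $\omega_{\eps,r,i}\in H^1_0(B_r)$, its translate $\omega_{\eps,r,i}(\,\cdot\,-\xi_i)$, extended by zero, is supported in $B_r(\xi_i)$. Hence the components of $\bf i_\eps(\xi_1,\ldots,\xi_\ell)$ have pairwise disjoint supports, which will make the cross-terms in $\cJ_\eps$ vanish.

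First I would verify that $\bf i_\eps(\xi_1,\ldots,\xi_\ell)\in\cN_\eps(\o)$: for $i\neq j$, $|u_j|^p|u_i|^p\equiv 0$, so the Nehari condition $\partial_i\cJ_\eps(\bf u)u_i=0$ reduces to $\io(\eps^2|\nabla u_i|^2+u_i^2)=\io\beta_{ii}|u_i|^{2p}$, which is precisely $J_{\eps,r,i}'(\omega_{\eps,r,i})\omega_{\eps,r,i}=0$, valid since $\omega_{\eps,r,i}\in\cM_{\eps,r,i}$. Next I would compute the energy directly: writing $u_i=\omega_{\eps,r,i}(\,\cdot\,-\xi_i)$, using translation invariance of the Lebesgue measure and disjointness of supports,
\[
\cJ_\eps(\bf i_\eps(\xi_1,\ldots,\xi_\ell))=\sum_{i=1}^\ell\left[\frac{1}{2\eps^N}\int_{B_r}(\eps^2|\nabla\omega_{\eps,r,i}|^2+\omega_{\eps,r,i}^2)-\frac{1}{2p\eps^N}\int_{B_r}\beta_{ii}|\omega_{\eps,r,i}|^{2p}\right]=\sum_{i=1}^\ell\mathfrak{c}_{\eps,r,i},
\]
where the last equality uses $J_{\eps,r,i}(\omega_{\eps,r,i})=\mathfrak{c}_{\eps,r,i}$. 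Crucially, this bound is \emph{uniform in the configuration} $(\xi_1,\ldots,\xi_\ell)\in F_{\ell,r}(\o)$, since the right-hand side no longer depends on the $\xi_i$'s.

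Finally, by Lemma~\ref{lem:limit_energy2}, $\mathfrak{c}_{\eps,r,i}\to\mathfrak{c}_{\infty,i}$ as $\eps\to 0$, hence
\[
\lim_{\eps\to 0}\sum_{i=1}^\ell\mathfrak{c}_{\eps,r,i}=\sum_{i=1}^\ell\mathfrak{c}_{\infty,i}<d.
\]
Choosing $\widetilde\eps_{r,d}>0$ small enough that $\sum_{i=1}^\ell\mathfrak{c}_{\eps,r,i}\leq d$ for all $\eps\in(0,\widetilde\eps_{r,d})$ yields the conclusion. There is no real obstacle here: the proof is essentially a bookkeeping argument once one observes that the geometric constraints defining $F_{\ell,r}(\o)$ force the supports to decouple, reducing the vector problem to $\ell$ independent scalar problems on the ball $B_r$.
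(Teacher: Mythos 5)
Your proof is correct and takes essentially the same approach as the paper: exploit the disjointness of supports (forced by the geometry of $F_{\ell,r}(\o)$) to reduce $\cJ_\eps(\mathbf{i}_\eps(\xi_1,\ldots,\xi_\ell))$ to $\sum_{i=1}^\ell\mathfrak{c}_{\eps,r,i}$, a quantity independent of the configuration, and then invoke Lemma~\ref{lem:limit_energy2}. The extra verification that $\mathbf{i}_\eps(\xi_1,\ldots,\xi_\ell)\in\cN_\eps(\o)$ is a nice addition that the paper leaves implicit.
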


\begin{proof}
Let $d>\sum_{i=1}^\ell \mathfrak{c}_{\infty,i}$ and $(\xi_1,\ldots,\xi_\ell)\in F_{\ell,r}(\o)$. For $\eps>0$ set $u_i(x):=\omega_{\eps,r,i}(x-\xi_i)$. Then, since the components have disjoint supports,
\begin{align*}
 \cJ_\eps(\bf i_\eps(\xi_1,\ldots,\xi_\ell))
 &= \frac{1}{2\eps^N}\sum_{i=1}^\ell\io\left(\eps^2|\nabla u_i|^2 + u_i^2\right) - \frac{1}{2p\eps^N}\sum_{i,j=1}^\ell\io\beta_{ij}|u_j|^p|u_i|^p\\
 &= \frac{1}{2\eps^N}\sum_{i=1}^\ell\io\left(\eps^2|\nabla u_i|^2 + u_i^2\right) - \frac{1}{2p\eps^N}\beta_{ii}|u_i|^{2p}\\
 &=\sum_{i=1}^\ell
 J_{\eps,r,i}(\omega_{\eps,r,i}) = \sum_{i=1}^\ell\mathfrak{c}_{\eps,r,i}
 \to \sum_{i=1}^\ell \mathfrak{c}_{\infty,i}
\end{align*}
as $\eps\to0$ by Lemma~\ref{lem:limit_energy2}, and the claim follows.
\end{proof}

Let $b:L^2(\rn)\smallsetminus\{0\}\to\rn$ be a \emph{generalized barycenter map} as constructed in \cite[Sec. 2]{bw}. It is shown in \cite[(2.3)]{bcw} that $b$ can be taken to be equivariant with respect to scaling. In other words, $b$ has the following properties: For every $u \in L^2(\rn)\smallsetminus\{0\}$,
\begin{enumerate}
\item[$(B_1)$] $b(u)=b(|u|)$,
\item[$(B_2)$] If $\xi \in \mathbb{R}^N$ and $u_{\xi}(x):=u(x-\xi)$, then $b(u_{\xi})=b(u)+\xi$,
\item[$(B_3)$] If $u$ is radially symmetric with respect to $\xi \in \mathbb{R}^N$, then $b(u)=\xi$.
\item[$(B_4)$] $\eps^{-1}b(u)=b(u(\eps \, \cdot \,))$ for every $\eps>0$.
\end{enumerate}

Let $\bf b:(L^2(\rn)\smallsetminus\{0\})^\ell\to(\rn)^\ell$ be given by
$$\bf b(\bf u):=(b(u_1),\ldots,b(u_\ell)),\quad\text{where \ }\bf u=(u_1,\ldots,u_\ell)\in(L^2(\rn)\smallsetminus\{0\})^\ell.$$
For $r>0$ set $\o_r^+:=\{x\in\rn:\dist(x,\o)\leq r\}$, and let
$$F_\ell(\o_r^+):=\{(\xi_1,\ldots,\xi_\ell)\in(\o_r^+)^\ell:\xi_i\neq \xi_j\text{ if }i\neq j\}$$
be the ordered configuration space of $\ell$ points in $\o_r^+$. 

\begin{proposition} \label{prop:b}
Given $r>0$ there exist $d_r>\sum_{i=1}^\ell \mathfrak{c}_{\infty,i}$ and $\widehat{\eps}_r>0$ such that $d_r>c_\eps(\o)$ and
$$
\bf b(\bf u)\in F_\ell(\o_r^+)\qquad\text{for every \ }\bf u\in\cN_{\eps}(\o)^{\leq d_r}\text{ \ with \ }\eps\in (0,\widehat{\eps}_r).
$$
\end{proposition}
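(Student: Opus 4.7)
My plan is to argue by contradiction, rescale to the limit problem on $\rn$, apply a bubble decomposition to locate each component's concentration point, and transport the information back through the equivariance properties $(B_1)$--$(B_4)$ of $b$.

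I would set $d_r:=\sum_{i=1}^\ell\mathfrak{c}_{\infty,i}+\delta$ with $\delta\in(0,\min_i\mathfrak{c}_{\infty,i})$ to be chosen small. By Lemma~\ref{lem:limit_energy}, $c_\eps(\o)\to\sum_{i}\mathfrak{c}_{\infty,i}$, so $d_r>c_\eps(\o)$ once $\eps$ is small. Suppose the conclusion fails: there exist $\eps_k\to 0$ and $\bf u_k\in\cN_{\eps_k}(\o)^{\leq d_r}$ with $\bf b(\bf u_k)\notin F_\ell(\o_r^+)$. Set $w_{ik}(z):=u_{ik}(\eps_k z)$, extended by $0$ outside $\o_k:=\{z:\eps_k z\in\o\}$. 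Then $\bf w_k\in\cN_1(\rn)$, $\cJ_1(\bf w_k)=\cJ_{\eps_k}(\bf u_k)\leq d_r$, and the Nehari identity $\cJ_1(\bf w_k)-\tfrac{1}{2p}\cJ_1'(\bf w_k)\bf w_k=\tfrac{p-1}{2p}\|\bf w_k\|_1^2$ makes $(\bf w_k)$ bounded in $H^1(\rn)^\ell$. The central step is to extract, in the spirit of Theorem~\ref{thm:profiles}, centers $\zeta_{ik}\in\rn$ and nonzero limits $\omega_i$ solving \eqref{eq:limit_problem} such that $w_{ik}(\,\cdot\,+\zeta_{ik})\to\omega_i$ strongly in $H^1(\rn)$ and $|\zeta_{ik}-\zeta_{jk}|\to\infty$ for $i\neq j$. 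The choice $\delta<\min_i\mathfrak{c}_{\infty,i}$ is what prevents any component from producing two bubbles: each $w_{ik}$ must carry at least one bubble of energy $\geq\mathfrak{c}_{\infty,i}$ (being on the Nehari set precludes vanishing in $L^{2p}$), while the energy ceiling $d_r$ forbids a second. The separation of centers comes from the vanishing of the coupling integrals $\irn\beta_{ij}|w_{ik}|^p|w_{jk}|^p$, exactly as in the computation in the proof of Proposition~\ref{prop:sign}.

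Once the bubble decomposition is in hand, the equivariance of $b$ closes the argument. Applying $(B_4)$ to $u_{ik}(x)=w_{ik}(\eps_k^{-1}x)$ yields $b(u_{ik})=\eps_k\,b(w_{ik})$; applying $(B_2)$ with $\xi=\zeta_{ik}$ and continuity of $b$ at the nonzero $L^2$-limit $\omega_i$ gives
\[
b(w_{ik})=\zeta_{ik}+b(w_{ik}(\,\cdot\,+\zeta_{ik}))=\zeta_{ik}+b(\omega_i)+o(1),
\]
so $b(u_{ik})=\eps_k\zeta_{ik}+O(\eps_k)$. Conclusion~(i) of Theorem~\ref{thm:profiles} forces $B_1(\zeta_{ik})\cap\o_k\neq\emptyset$ (otherwise $w_{ik}\equiv 0$ on $B_1(\zeta_{ik})$), so $\dist(\eps_k\zeta_{ik},\o)\leq\eps_k$ and therefore $\dist(b(u_{ik}),\o)\leq r$ for $k$ large, i.e.\ $b(u_{ik})\in\o_r^+$. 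For $i\neq j$, conclusion~(ii) yields $|b(u_{ik})-b(u_{jk})|=\eps_k|\zeta_{ik}-\zeta_{jk}|(1+o(1))>0$ eventually. Thus $\bf b(\bf u_k)\in F_\ell(\o_r^+)$ for $k$ large, contradicting the assumption.

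The principal obstacle is the bubble-decomposition step: Theorem~\ref{thm:profiles} is phrased for minimizing sequences, and one has to rerun its proof on the sublevel set with a strict excess $\delta>0$. The argument must use the negativity of the cross-couplings $\beta_{ij}<0$ together with the gap $\delta<\min_i\mathfrak{c}_{\infty,i}$ to rule out any extra bubble, and must verify that each weak limit $\omega_i$ is a genuine (nonzero) solution of \eqref{eq:limit_problem}. With that technical core in place, the barycenter computation and the geometric conclusion inside $\o_r^+$ become essentially formal.
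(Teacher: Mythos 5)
Your overall architecture—contradiction, rescaling $u_{ik}\mapsto w_{ik}(z)=u_{ik}(\eps_k z)$, invoking the profile decomposition of Theorem~\ref{thm:profiles}, and transporting the information back through the equivariance properties $(B_1)$--$(B_4)$—is exactly the paper's strategy, and your barycenter computation at the end (giving $b(u_{ik})=\eps_k\zeta_{ik}+o(\eps_k)$, hence distinctness of the $b(u_{ik})$ from statement $(ii)$ and containment in $\o_r^+$ from statement $(i)$) coincides with the paper's argument modulo presentation.

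There is, however, a genuine gap that you name as the ``principal obstacle'' but do not close, and which the paper sidesteps by a different quantifier order. You fix $d_r:=\sum_i\mathfrak{c}_{\infty,i}+\delta$ at the outset (with $\delta<\min_i\mathfrak{c}_{\infty,i}$) and then let only $\eps_k\to 0$ in the contradiction. The rescaled tuples $\bf w_k\in\cN_1(\rn)$ then satisfy merely $\cJ_1(\bf w_k)\leq d_r$, which is \emph{not} a minimizing sequence for $\cJ_1$ on $\cN_1(\rn)$, so Theorem~\ref{thm:profiles} does not apply as stated. Re-proving it on a sublevel with a strict excess $\delta>0$ is nontrivial: the components are coupled through the Nehari constraints and through the (nonnegative, since $\beta_{ij}<0$) cross-terms, so an energy-counting argument of the form ``each component carries exactly one bubble of energy $\geq\mathfrak{c}_{\infty,i}$'' requires a concentration-compactness splitting and a no-dichotomy argument that you outline but do not carry out. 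The paper avoids this entirely by exploiting the existential form of the statement: since one only needs \emph{some} $d_r>\sum_i\mathfrak{c}_{\infty,i}$, the negation allows one to take $d_k\searrow\sum_i\mathfrak{c}_{\infty,i}$ along with $\eps_k\to 0$. Then $c_1(\rn)\leq\cJ_1(\bf w_k)\leq d_k\to c_1(\rn)$, so $(\bf w_k)$ \emph{is} a minimizing sequence and Theorem~\ref{thm:profiles} applies verbatim, with no extension required. If you adopt this quantifier order, the rest of your argument goes through unchanged and matches the paper's.
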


\begin{proof}
We argue by contradiction. Fix $r>0$ and assume there are \ $d_k>\sum_{i=1}^\ell \mathfrak{c}_{\infty,i}$, \ $\eps_k>0$ \ and \ $\bf u_k=(u_{1 k}, \ldots, u_{\ell k})\in\cN_{\eps_k}(\o)^{\leq d_r}$ such that $d_k\to \sum_{i=1}^\ell \mathfrak{c}_{\infty,i}$ and $\eps_k\to 0$ as $k\to\infty$, and
$$\bf b(\bf u_k)\not\in F_\ell(\o_r^+).$$
Then, passing to a subsequence, there are two possibilities:
\begin{itemize}
\item[$(I)$] either there exist $i\neq j$ such that $b(u_{ik})=b(u_{jk})$ for every $k$,
\item[$(II)$] or there exists $i$ such that $\dist(b(u_{ik}),\o)\ge r$ for all $k$.
\end{itemize}
Next we prove that this leads to a contradiction.

Let $\bf w_k=(w_{1 k}, \ldots, w_{\ell k})$ be given by $w_{ik}(z):=u_{ik}(\eps_kz)$ if $z\in\o_k:=\{\eps_kx:x\in\o\}$ and $w_{ik}(z):=0$ if $z\in\rn\smallsetminus\o_k$. Then $\bf w_k\in\cN_1(\rn)$ and $\cJ_1(\bf w_k)=\cJ_{\eps_k}(\bf u_k)\to \sum_{i=1}^\ell \mathfrak{c}_{\infty,i}=c_1(\rn)$. So, for each $i=1, \ldots, \ell$, there exist a sequence $\left(\zeta_{i k}\right)$ in $\rn$ and a least energy radial solution $\omega_i$ to the problem \eqref{eq:limit_problem} satisfying statements $(i)$, $(ii)$ and $(iii)$ of Theorem~\ref{thm:profiles}. Statement $(iii)$ yields
$$w_{ik}(\,\cdot \, +\zeta_{ik})\to\omega_i\quad\text{in \ }H^1(\rn).$$
Set $\xi_{ik}:=\eps_k\zeta_{ik}$. From properties $(B_2)$, $(B_3)$ and $(B_4)$ and the continuity of the barycenter map we derive
\begin{equation} \label{eq:barycenter}
\eps_k^{-1}\left(b(u_{ik})-\xi_{ik}\right)=b(w_{ik})-\zeta_{ik}=b(w_{ik}(\,\cdot \, +\zeta_{ik}))\to b(\omega_i)=0.
\end{equation}
This implies that $(I)$ cannot hold true, otherwise
$$|\zeta_{ik}-\zeta_{jk}|=\eps_k^{-1}|\xi_{ik}-\xi_{jk}|\leq \eps_k^{-1}|\xi_{ik}-b(u_{ik})| + \eps_k^{-1}|b(u_{jk})-\xi_{jk}|\to 0,$$
contradicting statement $(ii)$ of Theorem~\ref{thm:profiles}. Furthermore, statement $(i)$ implies that $\dist(\zeta_{ik},\o_k)\leq 1$. Then, \eqref{eq:barycenter} yields that $\dist(b(w_{ik}),\o_k)\leq 2$ for large enough $k$ and, using property $(B_4)$, we get that
$$\dist(b(u_{ik}),\o)=\eps_k\dist(b(w_{ik}),\o_k)\leq 2\eps_k\to 0.$$
This contradicts $(II)$ and completes the proof.
\end{proof}

\begin{proof}[Proof of Theorem~\ref{thm:main}] 
Let $\eps_0>0$ and $d>\sum_{i=1}^\ell \mathfrak{c}_{\infty,i}$ be as in Proposition~\ref{prop:sign} and, for the given $r$, let $d_r>\sum_{i=1}^\ell \mathfrak{c}_{\infty,i}$ and $\widehat{\eps}_r>0$ be as in Proposition~\ref{prop:b}. Set $\overline{d}:=\min\{d,d_r\}$ and let $\widetilde{\eps}_{r,\overline{d}}$ be as in Lemma~\ref{lem:iota}. Define $\eps_r:=\min\{\eps_0,\widetilde{\eps}_{r,\overline{d}},\widehat{\eps}_r\}$. Then, for every $\eps\in(0,\eps_r)$, the maps
$$
F_{\ell,r}(\o)\xrightarrow{\bf i_\eps} \cN_{\eps}(\o)^{\leq \overline{d}}\xrightarrow{\bf b}F_\ell(\o_r^+)
$$
are well defined and, by property $(B_3)$ of the barycenter map, their composition is the inclusion $F_{\ell,r}(\o)\hookrightarrow F_\ell(\o_r^+)$. 

Let $\cZ:=(\z_2)^\ell$ act on $H^1_0(\o)^\ell$ as stated in \eqref{eq:action}. Property $(B_1)$ yields $\bf b(\bf s\bf u)=\bf b(\bf u)$ for every $\bf s\in\cZ$, so $\bf b$ can be written as $\bf b=\widetilde{\bf b}\circ q$, where $q:\cN_{\eps}(\o)^{\leq \overline{d}}\to\cN_{\eps}(\o)^{\leq \overline{d}}/\cZ$ is the map that associates to each $\bf u\in\cN_{\eps}(\o)^{\leq \overline{d}}$ its $\cZ$-orbit $\cZ\bf u$, and $\widetilde{\bf b}(\cZ\bf u):=\bf b(\bf u)$. Therefore, the composition
$$F_{\ell,r}(\o)\xrightarrow{\bf i_\eps}\cN_{\eps}(\o)^{\leq \overline{d}}\xrightarrow{q}\cN_{\eps}(\o)^{\leq \overline{d}}/\cZ\xrightarrow{\widetilde{\bf b}}F_\ell(\o_r^+)$$
equals $F_{\ell,r}(\o)\hookrightarrow F_\ell(\o_r^+)$. From Theorem~\ref{thm:cat}, Proposition~\ref{prop:sign} and property $(C_1)$ in Section~\ref{sec:configuration} we derive that $\cJ_\eps$ has at least
$$\cat(\cN_{\eps}(\o)^{\leq \overline{d}}/\cZ)\geq\cat(F_{\ell,r}(\o)\hookrightarrow F_\ell(\o_r^+))$$
critical $\cZ$-orbits in $\cN_{\eps}(\o)^{\leq\overline{d}}$ whose components do not change sign for any $\eps\in(0,\eps_r)$.

Observe that, if the components of $\bf u=(u_1,\ldots,u_\ell)$ do not change sign, then there exists a unique $\bf s\in\cZ$ such that $\bf s\bf u$ is nonnegative. This completes the proof.
\end{proof}

\section{The category of configuration spaces}
\label{sec:configuration}

Let $X$ and $Y$ be topological spaces and $f:X\to Y$ be continuous. The \emph{category of $f$}, denoted $\cat(f)$, is the smallest number of open subsets of $X$ that cover $X$  and have the property that the image under $f$ of each of them is contractible in $Y$. The \emph{category of $X$} is the category of the identity map \ $\mathrm{id}_X:X\to X$. If the map is an inclusion, then $\cat(A\hookrightarrow X)$ is the category of $A$ in $X$, as defined in the introduction.

The following properties are easily proved, cf. \cite[Subsection 1.3]{cp}.

\begin{itemize}
\item[$(C_1)$] For any two continuous functions $f:X\to Y$ and $g:Y\to Z$,
$$\cat(g\circ f)\leq\min\{\cat(f),\cat(g)\}.$$
In particular, $\cat(f)\leq\min\{\cat(X),\cat(Y)\}$.
\item[$(C_2)$] If $f,g:X\to Y$ are homotopic, then $\cat(f)=\cat(g)$.
\end{itemize}
Observe that $(C_1)$ and $(C_2)$ imply
\begin{itemize}
\item[$(C_3)$] If $h:X\to Y$ is a homotopy equivalence, then $\cat(X)=\cat(h)=\cat(Y)$.
\item[$(C_4)$] If $h:X\to Y$ is a homotopy equivalence, $g:Y\to Z$ and $f: X\to Z$ is homotopic to $g\circ h$, then $\cat(f)=\cat(g)$.
\end{itemize}
Indeed, if $\tilde{h}:Y\to X$ is a homotopy inverse of $h$ (i.e., $\tilde{h}\circ h$ is homotopic to $\mathrm{id}_X$ and $h\circ\tilde{h}$ is homotopic to $\mathrm{id}_Y$), then $\cat(X)=\cat(\tilde{h}\circ h)\leq\cat(h)\leq\cat(Y)$ and $\cat(Y)=\cat(h\circ\tilde{h})\leq\cat(h)\leq\cat(X)$. This proves $(C_3)$. To prove $(C_4)$ note that $\cat(f)=\cat(g\circ h)\leq\cat(g)$ and, since $f\circ\tilde{h}$ is homotopic to $g$, we also have $\cat(g)=\cat(f\circ\tilde{h})\leq\cat(f)$.

Let $\o$ be a bounded smooth domain in $\rn$, $N\geq 2$, $r\geq 0$ and $\ell\geq 2$. Recall that
$$F_{\ell,r}(\o):=\{(\xi_1,\ldots,\xi_\ell)\in\o^\ell:\dist(\xi_i,\rn\smallsetminus\o)>r\text{ \ and \ }|\xi_i-\xi_j|> 2r\text{ if }i\neq j\},$$
$F_\ell(\o):=F_{\ell,0}(\o)$, \ and \ $\o_r^+:=\{x\in\rn:\dist(x,\o)<r\}$. We set $B_r(0):=\{\xi\in\rn:|\xi|<r\}$ and write $\overline{B}_r(0)$ for its closure. The following results will be used in the proof of Theorem~\ref{thm:configuration}.

\begin{lemma} \label{lem:configuration_rn}
\begin{itemize}
\item[$(i)$] $\cat(F_\ell(B_r(0))\hookrightarrow F_\ell(\rn))=\cat(F_\ell(B_r(0)))=\cat(F_\ell(\rn))=\ell$ for every $r>0$.
\item[$(ii)$] If $\o$ is convex, then $\cat(F_\ell(\o))=\ell$.
\end{itemize}
\end{lemma}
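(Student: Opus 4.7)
The key external input is $\cat(F_\ell(\rn)) = \ell$, which is \cite[Theorem 1.2]{r}. My plan is to reduce both parts of the lemma to this via a single radial isotopy of $\rn$ that exhibits the inclusion $i: F_\ell(B_r(0)) \hookrightarrow F_\ell(\rn)$ as a homotopy equivalence; property $(C_3)$ then delivers all three equalities of $(i)$ at once, and part $(ii)$ follows from a standard homeomorphism of convex sets with the ball.

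I would fix the radial homeomorphism $\phi: \rn \to B_r(0)$ defined by $\phi(x) := \tfrac{rx}{1+|x|}$ and consider the straight-line homotopy
\[
H_t(x) := (1-t)x + t\phi(x) = \Bigl((1-t) + \tfrac{tr}{1+|x|}\Bigr) x, \qquad t\in[0,1].
\]
Each $H_t$ is radial with positive scalar factor $\lambda_t(s) := (1-t) + \tfrac{tr}{1+s}$. Since $\tfrac{d}{ds}(s\lambda_t(s)) = (1-t) + \tfrac{tr}{(1+s)^2} > 0$, the function $s \mapsto s\lambda_t(s)$ is strictly increasing, so $H_t: \rn \to \rn$ is injective for every $t$. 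The convex-combination estimate
\[
|H_t(x)| = (1-t)|x| + t\,\tfrac{r|x|}{1+|x|} < (1-t)r + tr = r \qquad \text{for } |x| < r
\]
shows $H_t(B_r(0)) \subset B_r(0)$ for all $t$, while $H_1(\rn) = B_r(0)$. Consequently the componentwise map $\mathcal{H}_t: F_\ell(\rn) \to F_\ell(\rn)$, $\mathcal{H}_t(\xi_1,\ldots,\xi_\ell) := (H_t(\xi_1),\ldots,H_t(\xi_\ell))$, is a well-defined continuous homotopy that preserves $F_\ell(B_r(0))$.

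Writing $\mathcal{H}_1 = i \circ \rho$, where $\rho: F_\ell(\rn) \to F_\ell(B_r(0))$ is the map induced by $H_1$, the homotopy $\mathcal{H}_t$ realizes $i \circ \rho \simeq \mathrm{id}_{F_\ell(\rn)}$, and its restriction to $F_\ell(B_r(0))$ realizes $\rho \circ i \simeq \mathrm{id}_{F_\ell(B_r(0))}$. Hence $i$ is a homotopy equivalence, and property $(C_3)$ gives $\cat(F_\ell(B_r(0))) = \cat(i) = \cat(F_\ell(\rn)) = \ell$, which is part $(i)$. For part $(ii)$, I would invoke the classical fact that any bounded open convex set $\o \subset \rn$ is homeomorphic to an open ball (e.g., via its Minkowski gauge after translating so that $0 \in \o$); the induced componentwise homeomorphism $F_\ell(\o) \cong F_\ell(B_r(0))$ combined with $(i)$ and $(C_3)$ yields $\cat(F_\ell(\o)) = \ell$.

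The main obstacle, and the reason for the radial choice, is that $H_t$ must be \emph{simultaneously} injective at every intermediate time (so that $\mathcal{H}_t$ stays inside the configuration space) and preserve $B_r(0)$ uniformly in $t$ (so that the restriction remains in $F_\ell(B_r(0))$). A generic homeomorphism $\rn \to B_r(0)$ linearly interpolated with the identity need not satisfy either condition; the explicit radial model for $\phi$ is what makes both transparent. Once these two features are in place, the rest is a routine application of properties $(C_1)$--$(C_4)$.
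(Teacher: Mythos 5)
Your proof is correct and is essentially the paper's argument: a radial isotopy of $\rn$ that interpolates between the identity and a radial homeomorphism identifying $B_r(0)$ with $\rn$, together with properties $(C_2)$–$(C_3)$, reduces everything to $\cat(F_\ell(\rn))=\ell$ from \cite[Theorem 1.2]{r}. The only cosmetic differences are the direction of the radial homeomorphism (you map $\rn\to B_r(0)$ via $\phi$, the paper maps $B_r(0)\to\rn$ via $\xi\mapsto h(|\xi|)\xi$ and then uses $(C_2)$ so that it need not verify $i$ is itself a homotopy equivalence) and the route for $(ii)$ (you pass through the ball, the paper identifies $\o$ directly with $\rn$); both are immaterial.
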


\begin{proof}
$(i):$ \ Fix a (strictly increasing) homeomorphism $h:[0,r)\to[0,\infty)$. Then, the map $\widehat{h}:F_\ell(B_r(0))\to F_\ell(\rn)$ given by
$$\widehat{h}(\xi_1,\ldots,\xi_\ell):=(h(|\xi_1|)\xi_1,\ldots,h(|\xi_\ell|)\xi_\ell)$$
is a homeomorphism. The map \ $F_\ell(B_r(0))\times[0,1]\to F_\ell(\rn)$ \ defined by
$$(\xi_1,\ldots,\xi_\ell,t)\mapsto ((1-t)\xi_1+th(|\xi_1|)\xi_1,\ldots,(1-t)\xi_\ell+th(|\xi_\ell|)\xi_\ell)$$
is a homotopy between the inclusion $F_\ell(B_r(0))\hookrightarrow F_\ell(\rn)$ and $\widehat{h}$. So from properties $(C_2)$ and $(C_3)$ we get
$$\cat(F_\ell(B_r(0))\hookrightarrow F_\ell(\rn))=\cat(\widehat{h})=\cat(F_\ell(B_r(0)))=\cat(F_\ell(\rn)).$$
It is shown in \cite[Theorem 1.2]{r} that $\cat(F_\ell(\rn))=\ell$. (Note that the definition of category used in \cite{r} equals our $\cat(X)-1$.)

$(ii):$ \ As every nonempty open convex subset of $\rn$ is homeomorphic to $\rn$ \cite[Chapter 3, Exercise J]{k}, this statement follows from \cite[Theorem 1.2]{r}.
\end{proof}

Next, we define
$$E_{\ell,r}(\o):=\{(\xi_1,\ldots,\xi_\ell)\in\o^\ell:\dist(\xi_i,\rn\smallsetminus\o)>2^{\ell-i+1}r\text{ \ and \ }|\xi_i-\xi_j|> 2^{\ell-i+1}r\text{ for all }j<i\}.$$
Note that $E_{\ell,0}(\o)=F_{\ell}(\o)$.

\begin{proposition} \label{prop:configuration}
If $\o$ is convex, then the inclusion $E_{\ell,r}(\o)\hookrightarrow F_\ell(\o)$ is a homotopy equivalence for small enough $r$.
\end{proposition}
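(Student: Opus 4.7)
I will argue by induction on $\ell$ using a pair of compatible Fadell-Neuwirth-type projections. Let $\o^-_s := \{x \in \o : \dist(x, \partial\o) > s\}$; by convexity of $\o$ this set is convex for every $s$. Projection onto the first $\ell - 1$ coordinates defines
$$p_\ell \colon F_\ell(\o) \to F_{\ell-1}(\o), \qquad q_\ell \colon E_{\ell, r}(\o) \to E_{\ell-1, 2r}(\o),$$
and the image of $q_\ell$ is correctly identified because $2^{\ell - i + 1} r = 2^{(\ell - 1) - i + 1} \cdot 2r$. The respective fibers over a point in the base are $\o \smallsetminus \{\xi_1, \ldots, \xi_{\ell-1}\}$ and $\o^-_{2r} \smallsetminus \bigcup_{j=1}^{\ell-1} \overline{B_{2r}(\xi_j)}$. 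The base case $\ell = 1$ reduces to the inclusion $\o^-_{2r} \hookrightarrow \o$, a homotopy equivalence because both sets are open and convex, hence contractible.

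For the inductive step I need to check three items. (i) The map $q_\ell$ is a Serre fibration: this follows from the classical Fadell-Neuwirth construction of local trivializations via ambient isotopies of $\o$ supported in small pairwise-disjoint neighborhoods of the base points $\xi_j$ chosen inside $\o^-_{2^{\ell - j + 1} r}$, so that the isotopies fix $\partial\o$ and transport each ball rigidly. (ii) The fiber inclusion $\o^-_{2r} \smallsetminus \bigcup_j \overline{B_{2r}(\xi_j)} \hookrightarrow \o \smallsetminus \{\xi_1, \ldots, \xi_{\ell-1}\}$ is a homotopy equivalence: a two-phase deformation first pushes any point of a ball $\overline{B_{2r}(\xi_j)}$ radially outward to its bounding sphere and then pushes any point of the boundary shell $\o\smallsetminus\overline{\o^-_{2r}}$ inward via the nearest-point projection onto the convex set $\overline{\o^-_{2r}}$; the two phases are supported in disjoint regions because each $\xi_j$ lies in $\o^-_{2^{\ell-j+1}r}$ with plenty of slack. (iii) The base inclusion $E_{\ell-1, 2r}(\o) \hookrightarrow F_{\ell-1}(\o)$ is a homotopy equivalence, which is the induction hypothesis applied with $r' = 2r$ (small since $r$ is small). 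Comparing the long exact sequences in homotopy of $p_\ell$ and $q_\ell$ via the five-lemma then shows that $E_{\ell, r}(\o) \hookrightarrow F_\ell(\o)$ induces isomorphisms on every $\pi_n$; Whitehead's theorem promotes this to a homotopy equivalence since both spaces are open in the smooth manifold $\o^\ell$ and so have the homotopy type of a CW complex.

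The hardest step is (i). The classical Fadell-Neuwirth argument for $p_\ell$ needs only to avoid collisions among the moving points, but for $q_\ell$ one must additionally ensure that the local isotopies leave the boundary $\partial \o^-_{2r}$ intact and carry each ball $\overline{B_{2r}(\xi_j)}$ rigidly. The combination of convexity of $\o$ and the cascading radii $2^{\ell - i + 1} r$ in the definition of $E_{\ell, r}(\o)$ is exactly what provides the room to do this: each $\xi_j$ comes with a buffer strictly larger than $2r$ both from $\partial\o$ and from every other $\xi_k$, so the isotopies can be constructed in small enough neighborhoods to fit.
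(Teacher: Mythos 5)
Your argument is essentially the same as the paper's: induction on $\ell$ using the Fadell--Neuwirth projection to the first $\ell-1$ coordinates, comparison of the resulting long exact homotopy sequences via the five lemma, and an upgrade from weak to genuine homotopy equivalence via the CW-type of open subsets of Euclidean space. The only differences are cosmetic or expository. You start the induction at $\ell=1$, where both $E_{1,r}(\o)=\o_{2r}^-$ and $F_1(\o)=\o$ are convex and hence contractible, whereas the paper starts at $\ell=2$ and uses the retraction onto antipodal pairs on a sphere $\s^{N-1}$; both work. Your step (i), establishing that $q_\ell$ is a fibration by local trivialization with isotopies supported near the $\xi_j$'s and transporting the excised balls rigidly, is precisely the content of the paper's Lemma~\ref{lem:configuration}. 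Your step (ii), proving directly that the inclusion of the fiber $\o_{2r}^-\smallsetminus\bigcup_j\overline{B}_{2r}(\xi_j)$ into $\o\smallsetminus\{\xi_1,\ldots,\xi_{\ell-1}\}$ is a homotopy equivalence, is actually a little more explicit than the paper, which identifies both fibers with a fixed $\o\smallsetminus Q_\ell$ via the homeomorphisms in Lemma~\ref{lem:configuration} and leaves the compatibility of these identifications with the inclusion implicit in the commutative diagram. One small imprecision in your (ii): the two-phase deformation as you describe it retracts $\o\smallsetminus\{\xi_j\}$ onto the closed set $\overline{\o_{2r}^-}\smallsetminus\bigcup_j B_{2r}(\xi_j)$ rather than onto the open fiber; this is easily repaired by overshooting slightly (e.g. retracting to the $r$-collar and from the $3r$-spheres), but as written the target does not quite match the fiber.
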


For the proof of this proposition we need the following lemma which, for $r=0$, is a special case of \cite[Theorem 3]{fn}. The definition of fiber bundle may be found, for instance, in \cite[Chapter 2, Section 7]{s}.

\begin{lemma} \label{lem:configuration}
Let $\ell\geq 3$. Fix $\ell-1$ different points $q_1,\ldots,q_{\ell-1}\in\o$. Then, for any $r\geq 0$ sufficiently small, the map
$$\psi:E_{\ell,r}(\o)\to E_{\ell-1,2r}(\o),\qquad \psi(\xi_1,\ldots,\xi_\ell):=(\xi_1,\ldots,\xi_{\ell-1}),$$
is a fiber bundle with fiber $\o\smallsetminus\{q_1,\ldots,q_{\ell-1}\}$.
\end{lemma}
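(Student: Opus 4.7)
The plan is to verify that $\psi$ admits a local trivialization at each base point, adapting the standard proof of the classical Fadell--Neuwirth fibration \cite[Theorem 3]{fn} (which gives the case $r=0$) to accommodate the buffer conditions defining $E_{\ell,r}(\o)$ and $E_{\ell-1,2r}(\o)$. The extra ingredient needed for $r>0$ is a continuous family of diffeomorphisms of $\o$ that simultaneously move the points $\eta_j^0$ to $\eta_j$ and carry the closed balls $\overline{B}_{2r}(\eta_j^0)$ exactly onto $\overline{B}_{2r}(\eta_j)$.

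First I would fix $\eta^0=(\eta_1^0,\ldots,\eta_{\ell-1}^0)\in E_{\ell-1,2r}(\o)$ and, using the strict inequalities $\dist(\eta_j^0,\partial\o)>2^{\ell-j+1}r\geq 4r$ and $|\eta_j^0-\eta_i^0|>4r$, pick $\delta>0$ so small that the balls $\overline{B}_{2r+2\delta}(\eta_j^0)$ are pairwise disjoint and contained in $\o_{2r}^-:=\{x\in\o:\dist(x,\partial\o)>2r\}$. Let $V$ be the open $\delta/2$-neighborhood of $\eta^0$ in $E_{\ell-1,2r}(\o)$, fix a smooth cutoff $\tau:[0,\infty)\to[0,1]$ with $\tau\equiv 1$ on $[0,2r+\delta]$ and $\tau\equiv 0$ on $[2r+2\delta,\infty)$, and define
\begin{equation*}
f_\eta(x):=x+\sum_{j=1}^{\ell-1}\tau(|x-\eta_j^0|)\,(\eta_j-\eta_j^0),\qquad \eta\in V,\ x\in\rn.
\end{equation*}
After possibly shrinking $V$, $f_\eta$ is a diffeomorphism of $\rn$ supported in $\bigcup_j\overline{B}_{2r+2\delta}(\eta_j^0)\subset\o_{2r}^-$ and equal to the pure translation $x\mapsto x+(\eta_j-\eta_j^0)$ on $B_{2r+\delta}(\eta_j^0)$; in particular it carries $\overline{B}_{2r}(\eta_j^0)$ exactly onto $\overline{B}_{2r}(\eta_j)$ and preserves $\o_{2r}^-$.

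Next I would use $f_\eta$ to build the trivialization. A routine case analysis on the regions $\overline{B}_{2r}(\eta_j^0)$, $B_{2r+\delta}(\eta_j^0)\smallsetminus\overline{B}_{2r}(\eta_j^0)$, $B_{2r+2\delta}(\eta_j^0)\smallsetminus B_{2r+\delta}(\eta_j^0)$, and the complement of $\bigcup_j B_{2r+2\delta}(\eta_j^0)$, combined with the triangle inequality and $|\eta_j-\eta_j^0|<\delta/2$, shows that $|f_\eta(x)-\eta_j|\leq 2r$ if and only if $x\in\overline{B}_{2r}(\eta_j^0)$, so $f_\eta$ restricts to a homeomorphism $F_{\eta^0}\to F_\eta$, where $F_\eta:=\o_{2r}^-\smallsetminus\bigcup_j\overline{B}_{2r}(\eta_j)$ is precisely $\psi^{-1}(\eta)$ viewed as a subset of $\o$. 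Joint continuity of $f_\eta$ and $f_\eta^{-1}$ in $\eta$ is visible from the explicit formula, so
\begin{equation*}
\Phi:V\times F_{\eta^0}\to\psi^{-1}(V),\qquad \Phi(\eta,x):=(\eta_1,\ldots,\eta_{\ell-1},f_\eta(x)),
\end{equation*}
is a homeomorphism commuting with the projections onto $V$, and $\psi$ is locally trivial at $\eta^0$ with fiber $F_{\eta^0}$.

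The step I expect to require the most care is the identification of $F_{\eta^0}$ with $\o\smallsetminus\{q_1,\ldots,q_{\ell-1}\}$. I would build this homeomorphism in three stages: first, using the smoothness of the distance function to $\partial\o$ in a tubular neighborhood, produce a diffeomorphism $\o_{2r}^-\cong\o$ that is the identity on a fixed compact set containing $\bigcup_j\overline{B}_{2r}(\eta_j^0)$; second, on each ball $B_R(\eta_j^0)\subset\o$ with $R>2r$, apply the standard radial reparametrization $B_R(\eta_j^0)\smallsetminus\overline{B}_{2r}(\eta_j^0)\cong B_R(\eta_j^0)\smallsetminus\{\eta_j^0\}$, extended by the identity outside, to obtain a homeomorphism onto $\o\smallsetminus\{\eta_1^0,\ldots,\eta_{\ell-1}^0\}$; third, compose with an ambient isotopy of $\o$ carrying $(\eta_1^0,\ldots,\eta_{\ell-1}^0)$ to $(q_1,\ldots,q_{\ell-1})$, available because $\o$ is a connected manifold. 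Assembling the local trivializations thus produced completes the verification that $\psi$ is a fiber bundle with fiber $\o\smallsetminus\{q_1,\ldots,q_{\ell-1}\}$.
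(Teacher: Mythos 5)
Your proof is correct and takes essentially the same approach as the paper: both construct a local trivialization over a small product neighborhood of the base configuration by building a family of compactly supported homeomorphisms of the domain that translate the small balls around the base points to the corresponding balls around the nearby configuration, and then identify the reference fiber with $\o\smallsetminus\{q_1,\ldots,q_{\ell-1}\}$ via radial reparametrization, the Homogeneity Lemma, and the homeomorphism $\o_{2r}^-\cong\o$. The only cosmetic differences are that you give an explicit cutoff formula for $f_\eta$ where the paper merely asserts the existence of the $h_j$'s, and your $f_\eta$ runs from the reference fiber to the variable fiber, i.e.\ it is the inverse of the paper's $h_{\overline\zeta}$.
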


\begin{proof}
For $r\geq 0$ sufficiently small we have that $E_{\ell-1,2r}(\o)\neq\emptyset$ and $\o_{2r}^-\smallsetminus\{q_1,\ldots,q_{\ell-1}\}$ is homeomorphic to $\o\smallsetminus\{q_1,\ldots,q_{\ell-1}\}$, where $\o_{2r}^-:=\{x\in\o:\dist(x,\rn\smallsetminus\o)>2r\}$. Take any such $r$, and let $(\xi_1,\ldots,\xi_{\ell-1})\in E_{\ell-1,2r}(\o)$. Fix $\delta>0$ such that $2^{\ell-i+1}r+4\delta<\min_{j<i}|\xi_i-\xi_j|$ and $\dist(\xi_i,\rn\smallsetminus\o)>2^{\ell-i+1}r+2\delta$. Then $U:=B_{\delta}(\xi_1)\times\cdots\times B_{\delta}(\xi_{\ell-1})\subset E_{\ell-1,2r}(\o)$ and $|\xi_i-\xi_j|>4r+4\delta$ if $i\neq j$ and $ i,j\in\{1,\ldots,\ell-1\}$. For each $j$ we fix a continuous function
$$h_j:U\times \overline{B}_{2r+2\delta}(\xi_j)\to\overline{B}_{2r+2\delta}(\xi_j)$$
such that, for each $\overline{\zeta}=(\zeta_1,\ldots,\zeta_{\ell-1})\in U$, the function $h_{j,\overline{\zeta}}(x):=h_j(\overline{\zeta},x)$ satisfies
\begin{itemize}
\item $h_{j,\overline{\zeta}}(x)=x$ if $x\in\partial\overline{B}_{2r+2\delta}(\xi_j)$,
\item $h_{j,\overline{\zeta}}(x)=x-\zeta_j+\xi_j$ if $x\in\overline{B}_{2r}(\xi_j)$,
\item $h_{j,\overline{\zeta}}(x):\overline{B}_{2r+2\delta}(\xi_j)\to\overline{B}_{2r+2\delta}(\xi_j)$ is a homeomorphism.
\end{itemize}
Note that $h_{j,\overline{\zeta}}$ maps $\overline{B}_{2r}(\zeta_j)$ onto $\overline{B}_{2r}(\xi_j)$. Now define $h_{\overline{\zeta}}:\o\to\o$ by
\begin{equation*}
h_{\overline{\zeta}}(x):=
\begin{cases}
h_{j,\overline{\zeta}}(x) &\text{if \ }x\in\overline{B}_{2r+2\delta}(\xi_j), \\
x &\text{if \ }x\in\o\smallsetminus\bigcup\limits_{j=1}^{\ell-1}B_{2r+2\delta}(\xi_j).
\end{cases}
\end{equation*}
Since $\overline{B}_{2r+2\delta}(\xi_j)\subset\o_{2r}^-$, we have that $h_{\overline{\zeta}}$ maps \ $\o_{2r}^-\smallsetminus\bigcup\limits_{j=1}^{\ell-1}\overline{B}_{2r}(\zeta_j)$ \ onto \ $\o_{2r}^-\smallsetminus\bigcup\limits_{j=1}^{\ell-1}\overline{B}_{2r}(\xi_j)$. \ Note that
$$\psi^{-1}(U)=\Big\{(\zeta_1,\ldots,\zeta_{\ell}):(\zeta_1,\ldots,\zeta_{\ell-1})\in U \text{ \ and \ } \zeta_\ell\in \o_{2r}^-\smallsetminus\bigcup\limits_{j=1}^{\ell-1}\overline{B}_{2r}(\zeta_j)\Big\}.$$
Now, we fix homeomorphisms
$$f:\,\o_{2r}^-\smallsetminus\bigcup\limits_{j=1}^{\ell-1}\overline{B}_{2r}(\xi_j)\cong\o_{2r}^-\smallsetminus\{\xi_1,\ldots,\xi_{\ell-1}\}\cong\o_{2r}^-\smallsetminus\{q_1,\ldots,q_{\ell-1}\}\cong\o\smallsetminus\{q_1,\ldots,q_{\ell-1}\}.$$
For the second one see \cite[Chapter 4, Homogeneity Lemma]{m2}. As indicated above, we denote the composition of these homeomorphisms by $f$. The map \ $\tau:\psi^{-1}(U)\to U\times\left(\o\smallsetminus\{q_1,\ldots,q_{\ell-1}\}\right)$ \ given by
$$\tau(\zeta_1,\ldots,\zeta_{\ell}):=(\zeta_1,\ldots,\zeta_{\ell-1},f(h_{(\zeta_1,\ldots,\zeta_{\ell-1})}(\zeta_\ell)))$$
is a homeomorphism and satisfies $\mathrm{proj}_U\circ\tau=\psi|_U$. This proves that $\psi$ is a fiber bundle. 
\end{proof}

\begin{proof}[Proof of Proposition~\ref{prop:configuration}] Without loss of generality we assume that $0\in\o$. We prove the statement by induction on $\ell$.

Let $\ell=2$. We fix $s>0$ such that $\overline{B}_{s}(0)\subset\o_{4s}^-:=\{x\in\o:\dist(x,\rn\smallsetminus\o)>4s\}$. For each $r\in[0,s)$ we define maps
\begin{align*}
&\iota_r:\s^{N-1}\to E_{2,r}(\o),\qquad \iota_r(x):=(sx,-sx), \\
&\varrho_r:E_{2,r}\to \s^{N-1},\qquad \varrho_r(\xi_1,\xi_2)=\frac{\xi_1-\xi_2}{|\xi_1-\xi_2|},
\end{align*}
where $\s^{N-1}$ is the unit sphere in $\rn$. Then, $\varrho_r\circ\iota_r=\mathrm{id}_{\s^{N-1}}$. Furthermore, since $\o$ is convex and $s>r$, the map $E_{2,r}(\o)\times[0,1]\to E_{2,r}(\o)$ given by
$$(\xi_1,\xi_2,t)\longmapsto\left((1-t)\xi_1+ts\frac{\xi_1-\xi_2}{|\xi_1-\xi_2|},\,(1-t)\xi_2+ts\frac{\xi_2-\xi_1}{|\xi_1-\xi_2|}\right)$$
is well defined and it is a homotopy between $\mathrm{id}_{E_{2,r}(\o)}$ and $\iota_r\circ\varrho_r$. Since the composition 
$$\s^{N-1}\xrightarrow{\iota_r} E_{2,r}(\o)\hookrightarrow F_2(\o)\xrightarrow{\varrho_0}\s^{N-1}$$
is the identity map on $\s^{N-1}$ and $\iota_r$ and $\varrho_0$ are homotopy equivalences, we have that $E_{2,r}(\o)\hookrightarrow F_2(\o)$ is a homotopy equivalence.

Assume the statement holds true for $\ell-1$ with $\ell\geq 3$. For every $r\geq 0$ small enough, as in Lemma~\ref{lem:configuration}, the map $\psi:E_{\ell,r}(\o)\to E_{\ell-1,2r}(\o)$ is a fiber bundle, hence, it is a fibration \cite[Corollary 2.7.14]{s} with fiber $\o\smallsetminus Q_\ell$ where $Q_\ell:=\{q_1,\ldots,q_{\ell-1}\}$. Note that $E_{\ell,0}(\o)=F_{\ell}(\o)$. Note also that $E_{\ell,r}(\o)$ and $F_{\ell}(\o)$ are path connected for every $\ell\geq 2$. This follows by induction using the fiber bundle structure, because they are homotopic to $\s^{N-1}$ if $\ell=2$ and $N\geq 2$. From the long exact homotopy sequence of a fibration \cite[Theorem 7.2.10]{s} we get a commutative diagram
\begin{equation*}
\begin{matrix}
\pi_{i+1}(F_{\ell-1}(\o)) & \rightarrow & \pi_i(\o\smallsetminus Q_\ell) & \rightarrow & \pi_i(F_\ell(\o)) & \xrightarrow{\psi_*}  & \pi_i(F_{\ell-1}(\o)) & \rightarrow & \pi_{i-1}(\o\smallsetminus Q_\ell)\\
\uparrow & &  \uparrow & &  \uparrow & &  \uparrow & &  \uparrow \\
\pi_{i+1}(E_{\ell-1,2r}(\o)) & \rightarrow & \pi_i(\o\smallsetminus Q_\ell) & \rightarrow & \pi_i(E_{\ell,r}(\o)) & \xrightarrow{\psi_*}  & \pi_i(E_{\ell-1,2r}(\o)) & \rightarrow & \pi_{i-1}(\o\smallsetminus Q_\ell)
\end{matrix}
\end{equation*}
where all vertical arrows are induced by inclusions. The fact that the spaces above are path connected is needed in order to assert that the homotopy groups are independent of the choice of a base point.  Using the induction hypothesis we see that the two vertical leftmost arrows and the two vertical rightmost arrows in the diagram are isomorphisms for $r$ sufficiently small. So, by the five lemma (see e.g. \cite[Lemma 4.5.11]{s}), the middle vertical arrow is an isomorphism too. This proves that $E_{\ell,r}(\o)\hookrightarrow F_\ell(\o)$ is a weak homotopy equivalence. Since $E_{\ell,r}(\o)$ is an open subset of $\r^{\ell N}$, it has the homotopy type of a CW-complex \cite[Theorem 1 and Corollary 1]{m}. So, by \cite[Corollary 7.6.24]{s}, $E_{\ell,r}(\o)\hookrightarrow F_\ell(\o)$ is a homotopy equivalence. 
\end{proof}

\begin{remark}
\emph{
Note that $\pi_1(\o\smallsetminus Q_\ell)$ may not be commutative (and it never is if $N=2$ and $\ell \ge 3$). The proof of the five lemma in \cite{s} is for commutative groups. However, this lemma is known to hold also in the noncommutative case and it is easy to see that the proof in \cite{s} still applies, with obvious changes.
}
\end{remark}

\begin{proof}[Proof of Theorem~\ref{thm:configuration}] \ Without loss of generality we may assume that $0\in\o$. Fix $s>0$ such that $B_s(0)\subset\o$. Applying $(C_1)$ to the composition of inclusions \ $E_{\ell,r}(B_s(0))\hookrightarrow F_{\ell,r}(B_s(0))\hookrightarrow F_{\ell,r}(\o)\hookrightarrow F_\ell(\o_r^+)\hookrightarrow F_\ell(\rn)$ \ we get that 
$$\cat(E_{\ell,r}(B_s(0))\hookrightarrow F_\ell(\rn))\leq\cat(F_{\ell,r}(\o)\hookrightarrow F_\ell(\o_r^+)).$$
By Proposition~\ref{prop:configuration} we have that $E_{\ell,r}(B_s(0))\hookrightarrow F_{\ell}(B_s(0))$ is a homotopy equivalence for sufficiently small $r>0$. So from Lemma~\ref{lem:configuration_rn} and $(C_4)$  we get that 
$$\ell=\cat(F_{\ell}(B_s(0))\hookrightarrow F_\ell(\rn))=\cat(E_{\ell,r}(B_s(0))\hookrightarrow F_\ell(\rn)),$$
and we derive 
$$\cat(F_{\ell,r}(\o)\hookrightarrow F_\ell(\o_r^+))\geq\ell.$$

If $\o$ is convex, Proposition~\ref{prop:configuration} states that $E_{\ell,r}(\o)\hookrightarrow F_{\ell}(\o)$ is a homotopy equivalence for sufficiently small $r>0$. Note that $F_{\ell,2^{\ell}r}(\o)\subset E_{\ell,r}(\o)$. So, from $(C_1)$, $(C_2)$ and Lemma~\ref{lem:configuration_rn} we obtain that
$$\cat(F_{\ell,2^{\ell}r}(\o)\hookrightarrow F_\ell(\o_{2^{\ell}r}^+))\leq\cat(E_{\ell,r}(\o)\hookrightarrow F_\ell(\o))=\cat(F_\ell(\o))=\ell.$$
This shows that $\cat(F_{\ell,r}(\o)\hookrightarrow F_\ell(\o_r^+))=\ell$ for sufficiently small $r$ if $\o$ is convex.
\end{proof}

\subsection*{Acknowledgments}

The authors thank the referees for carefully reading the manuscript and for useful suggestions.
M. Clapp thanks the Department of Mathematics at Stockholm University for their kind hospitality. A. Saldaña and A. Szulkin thank the Instituto de Matemáticas - Campus Juriquilla for the kind hospitality. M. Clapp is supported by CONACYT (Mexico) through the research grant A1-S-10457. A. Saldaña is supported by UNAM-DGAPA-PAPIIT (Mexico) grant IA100923 and by CONACYT (Mexico) grant A1-S-10457. A. Szulkin is supported by a grant from the Magnuson foundation of the Royal Swedish Academy of Sciences.

\subsection*{Conflicts of interests}

The authors declare no conflict of interests.

\bigskip

\begin{flushleft}
\textbf{Mónica Clapp}\\
Instituto de Matemáticas\\
Universidad Nacional Autónoma de México \\
Campus Juriquilla\\
Boulevard Juriquilla 3001\\
76230 Querétaro, Qro., Mexico\\
\texttt{monica.clapp@im.unam.mx} 
\medskip 

\textbf{Alberto Saldaña}\\
Instituto de Matemáticas\\
Universidad Nacional Autónoma de México \\
Circuito Exterior, Ciudad Universitaria\\
04510 Coyoacán, Ciudad de México, Mexico\\
\texttt{alberto.saldana@im.unam.mx}
\medskip

\textbf{Andrzej Szulkin}\\
Department of Mathematics\\
Stockholm University\\
106 91 Stockholm, Sweden\\
\texttt{andrzejs@math.su.se} 
\end{flushleft}
	

\begin{thebibliography}{99}
	
\bibitem{acs23} Angeles, F.; Clapp, M.; Saldaña, A.: Exponential decay of the solutions to nonlinear Schrödinger systems. Calc. Var. Partial Differential Equations 62 (2023), no. 5, Paper No. 160.

\bibitem{bw}  Bartsch, Th.; Weth, T.: Three nodal solutions of singularly perturbed elliptic equations on domains without topology. Ann. Inst. H. Poincaré C Anal. Non Linéaire 22 (2005), no. 3, 259–281.  

\bibitem{bcw}  Bartsch, Th.; Clapp, M.; Weth, T.: Configuration spaces, transfer, and 2-nodal solutions of a semiclassical nonlinear Schrödinger equation. Math. Ann. 338 (2007), no. 1, 147–185. 

\bibitem{bc} Benci, V.; Cerami, G.: The effect of the domain topology on the number of positive solutions of nonlinear elliptic problems. Arch. Rational Mech. Anal. 114 (1991), no. 1, 79–93.

\bibitem{bel} Berestycki, H.; Lions, P.-L.: Nonlinear scalar field equations. I. Existence of a ground state. Arch. Rational Mech. Anal. 82 (1983), no. 4, 313–345.

\bibitem{BjLyMsh}  Byeon, J.; Lee, Y.; Moon, S.: Partly clustering solutions of nonlinear Schrödinger systems with mixed interactions. J. Funct. Anal. 280 (2021), no. 12, Paper No. 108987, 54 pp.

\bibitem{cp} Clapp, M.; Puppe, D.: Invariants of the Lusternik-Schnirelmann type and the topology of critical sets. Trans. Amer. Math. Soc. 298 (1986), no. 2, 603–620.

\bibitem{cp2} Clapp, M.; Puppe, D.: Critical point theory with symmetries. J. Reine Angew. Math. 418 (1991), 1–29.

\bibitem{cs} Clapp, M.; Soares, M.: Coupled and uncoupled sign-changing spikes of singularly perturbed elliptic systems. Commun. Contemp. Math. 2 (2022), Paper No. 2250048, 24 pp.
		
\bibitem{csz} Clapp, M.; Szulkin, A.: A simple variational approach to weakly coupled competitive elliptic systems. NoDEA Nonlinear Differential Equations Appl. 26 (2019), no. 4, Paper No. 26, 21 pp.

\bibitem{css}  Clapp, M.; Saldaña, A.; Szulkin, A.: Phase separation, optimal partitions, and nodal solutions to the Yamabe equation on the sphere. Int. Math. Res. Not. IMRN (2021), no. 5, 3633–3652.

\bibitem{dis23} Dieb, A., Ianni, I., Saldaña, A.: Uniqueness and nondegeneracy for Dirichlet fractional problems in bounded domains via asymptotic methods. Nonlinear Analysis, Volume 236 (2023), 113354.

\bibitem{fn} Fadell, E.; Neuwirth, L.: Configuration spaces. Math. Scand. 10 (1962), 111–118. 

\bibitem{gnn} Gidas, B.; Ni, W.M.; Nirenberg, L.: Symmetry of positive solutions of nonlinear elliptic equations in $\rn$. Mathematical analysis and applications, Part A, pp. 369–402, Adv. Math. Suppl. Stud., 7a, Academic Press, New York-London, 1981. 

\bibitem{Huang} Huang, X., Li, H. and Wang, Z.Q., 2023. Multiple Non-radial Solutions for Coupled Schr\"{o}dinger Equations. Preprint arXiv:2309.05168.

\bibitem{k} Kelley, J.L.: General topology. D. Van Nostrand Co., Inc., Toronto-New York-London, 1955. 
		
\bibitem{lw} Lin, T.; Wei, J.: Spikes in two coupled nonlinear Schrödinger equations. Ann. Inst. H. Poincaré Anal. Non Linéaire 22 (2005), no. 4, 403–439.

\bibitem{m} Milnor, J.: On spaces having the homotopy type of a CW-complex. Trans. Amer. Math. Soc. 90 (1959), 272–280.

\bibitem{m2} Milnor, J.W.: Topology from the differentiable viewpoint. The University Press of Virginia, Charlottesville 1965.

\bibitem{p06} Pomponio, A.: Coupled nonlinear Schrödinger systems with potentials. J. Differential Equations 227 (2006), no. 1, 258–281. 

\bibitem{r} Roth, F.: On the category of Euclidean configuration spaces and associated fibrations. Groups, homotopy and configuration spaces, 447–461, Geom. Topol. Monogr., 13, Geom. Topol. Publ., Coventry, 2008. 

\bibitem{si} Sirakov, B.: Least energy solitary waves for a system of nonlinear Schrödinger equations in $\rn$. Comm. Math. Phys. 271 (2007), no. 1, 199–221.

\bibitem{s} Spanier, E.H.: Algebraic topology. McGraw-Hill Book Co., New York-Toronto-London 1966.

\bibitem{WWL} Wang, W.; Wu, T.F.; Liu, C.H.: On the multiple spike solutions for singularly perturbed elliptic systems. Discrete Contin. Dyn. Syst. Ser. B 18 (2013), no. 1, 237–258. 

\bibitem{WjWy} Wei, J.; Wu, Y.: Ground states of nonlinear Schrödinger systems with mixed couplings. J. Math. Pures Appl. (9) 141 (2020), 50–88.

\bibitem{w} Willem, M.: Minimax theorems. Progress in Nonlinear Differential Equations and their Applications, 24. Birkhäuser Boston, Inc., Boston, MA, 1996.

\end{thebibliography}
\end{document}